%% LyX 1.6.8 created this file.  For more info, see http://www.lyx.org/.
%% Do not edit unless you really know what you are doing.
\documentclass[11pt,oneside,russian,english,reqno]{amsart}
\usepackage[T1]{fontenc}
\usepackage[koi8-r,latin9]{inputenc}
\usepackage{babel}

\usepackage{url}
\usepackage{amsthm}
\usepackage{amstext}
\usepackage{amssymb}
\usepackage{esint}
\usepackage[unicode=true, pdfusetitle,
 bookmarks=true,bookmarksnumbered=false,bookmarksopen=false,
 breaklinks=true,pdfborder={0 0 0},backref=false,colorlinks=false]
 {hyperref}

\makeatletter

%%%%%%%%%%%%%%%%%%%%%%%%%%%%%% LyX specific LaTeX commands.
\newcommand{\noun}[1]{\textsc{#1}}

\AtBeginDocument{\DeclareFontEncoding{T2A}{}{}}

%% Because html converters don't know tabularnewline
\providecommand{\tabularnewline}{\\}

%%%%%%%%%%%%%%%%%%%%%%%%%%%%%% Textclass specific LaTeX commands.
\theoremstyle{plain}
\newtheorem{thm}{Theorem}
  \theoremstyle{plain}
  \newtheorem*{cor*}{Corollary}
 \theoremstyle{definition}
 \newtheorem*{defn*}{Definition}
  \theoremstyle{plain}
  \newtheorem{lem}{Lemma}
  \theoremstyle{remark}
  \newtheorem*{rem*}{Remark}

%%%%%%%%%%%%%%%%%%%%%%%%%%%%%% User specified LaTeX commands.
\usepackage{fourier}
\usepackage{graphics}
\usepackage{color}

\renewcommand{\theenumi}{(\roman{enumi})}
\renewcommand{\labelenumi}{\theenumi}

\DeclareMathOperator{\Cov}{Cov}
\DeclareMathOperator{\Haus}{Haus}
\DeclareMathOperator{\Mink}{Mink}
\DeclareMathOperator{\F}{F}
\DeclareMathOperator{\lp}{l}
\DeclareMathOperator{\dist}{dist}
\DeclareMathOperator{\diam}{diam}
\theoremstyle{plain}

\newtheorem{NIH}{Theorem}

\theoremstyle{remark}

\def\righteqn#1{\llap{$\displaystyle #1$}}
%Copied shamelessly from amsmath.sty, with @ replaced with CS (which
%stands of course for Copied Shamelessly) - mucking around with
%catcodes is NOT for me.

\makeatother

\begin{document}

\title[Singular distributions and dimension]{Singular distributions, dimension of support, and symmetry of Fourier
transform}

\author{Gady Kozma and Alexander Olevski\u\i}

\address{GK: Department of Mathematics, The Weizmann Institute of Science,
Rehovot POB 76100, Israel.\\
AO: School of Mathematics, Tel Aviv University, Tel Aviv 69978,
Israel.}

\email{gady.kozma@weizmann.ac.il, olevskii@post.tau.ac.il}

\subjclass[2000]{42A63, 42A50, 42A20, 28A80}

\keywords{Hausorff dimension, Frostman's theorem, Fourier symmetry}
\begin{abstract}
We study the {}``Fourier symmetry'' of measures and distributions
on the circle, in relation with the size of their supports. The main
results of this paper are:
\begin{enumerate}
\item A one-side extension of Frostman's theorem, which connects the rate
of decay of Fourier transform of a distribution with the Hausdorff
dimension of the support;{\small \par}
\item A construction of compacts of {}``critical'' size, which support
distributions (even pseudo-functions) with anti-analytic part belonging
to $l^{2}$.{\small \par}
\end{enumerate}
We also give examples of non-symmetry which may occur for measures
with {}``small'' support. A number of open questions are stated.{\small \par}

\medskip{}

\noindent \textsc{Résumé.} On étudie la {}``symétrie de Fourier''
des mesures et des distributions sur le cercle en rapport avec la
dimension de leurs supports. Les résultats essentiels du présent travail
sont les suivants:
\begin{enumerate}
\item L'extension unilatérale du théorème de Frostman qui met en rapport
la vitesse de décroissance de la transformation de Fourier d'une distribution
et la dimension de Hausdorf de son support.{\small \par}
\item La construction des compacts d'une taille {}``critique'' qui peut
supporter des distributions (voire des pseudo-fonctions) avec une
partie anti-ana\-lytique appartenant à $l^{2}$.{\small \par}
\end{enumerate}
On donne également quelques exemples de l'asymétrie qui peut se produire
pour des mesures à {}``petit'' support. Plusieurs questions ouvertes
sont formulées.
\end{abstract}
\maketitle

\section*{I}

Let \emph{$K$} be a compact subset of the circle group $\mathbb{T}$.
Frostman's theorem allows to characterize the Hausdorff dimension
of $K$ by examining (non-trivial) measures supported on $K$. The
most common version states that \[
\dim K=\sup\left\{ \alpha:\exists\mu\mbox{ supported on }K\mbox{ with }\int\!\!\!\int\frac{d\mu(x)\, d\mu(y)}{|x-y|^{\alpha}}<\infty\right\} .\]
It is not difficult to translate this theorem to the language of Fourier
coefficients. One then gets\begin{equation}
\dim K=\sup\left\{ \alpha:\exists\mu\mbox{ supported on }K\mbox{ with }\sum_{n=-\infty}^{\infty}\frac{|\widehat{\mu}(n)|^{2}}{|n|^{1-\alpha}+1}<\infty\right\} .\label{eq:twosidedfull}\end{equation}
Beurling showed that one may replace measures with arbitrary (Schwartz)
distributions. The following version of Frostman's theorem thus holds
(\cite{B49} or \cite[p.~40]{KS94}),

\begin{NIH}Let $0<\alpha\le1$. Then
\begin{enumerate}
\item \label{enu:S->dim}If there is a (non-trivial) distribution $S$ supported
on $K$ such that\begin{equation}
\sum_{n=-\infty}^{\infty}\frac{|\widehat{S}(n)|^{2}}{|n|^{1-\alpha}+1}<\infty,\label{eq:twosided}\end{equation}
then $\dim K\geq\alpha$.
\item If $\dim K>\alpha$ then there is probability measure $S$, satisfying
(\ref{eq:twosided}).
\end{enumerate}
\end{NIH}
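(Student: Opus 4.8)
\emph{Overview.} I would prove the two halves by quite different means. Part (ii) is essentially the classical theorem of Frostman, already recorded in the Fourier form (\ref{eq:twosidedfull}); part (i) is a duality argument whose only non-elementary ingredient is that low-dimensional compacts carry no mass in the relevant capacity.

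\emph{Part (ii).} If $\dim K>\alpha$, pick $\beta$ with $\alpha<\beta<\dim K$; by (\ref{eq:twosidedfull}) there is a non-trivial positive measure $\mu$ on $K$ with $\sum_n|\widehat\mu(n)|^2(|n|^{1-\beta}+1)^{-1}<\infty$. Since $\beta>\alpha$ gives $|n|^{1-\beta}\le|n|^{1-\alpha}$ for $|n|\ge1$, hence $(|n|^{1-\alpha}+1)^{-1}\le(|n|^{1-\beta}+1)^{-1}$ for every $n$, the same $\mu$ already satisfies (\ref{eq:twosided}); normalising it to a probability measure finishes this part. (Self-contained variant: take a Frostman measure $\mu$ on $K$ with $\mu(I)\lesssim|I|^{\beta}$ for $\alpha<\beta<\dim K$, bound the Riesz energy $\iint|x-y|^{-\alpha}\,d\mu\,d\mu$ by splitting into dyadic shells $|x-y|\asymp2^{-k}$ and using $\iint_{|x-y|\le2^{-k}}d\mu\,d\mu\le\int\mu(B(x,2^{-k}))\,d\mu(x)\lesssim2^{-k\beta}$, and invoke the comparison between this energy and the left side of (\ref{eq:twosided}) coming from the Fourier expansion of $|x|^{-\alpha}$ on $\mathbb{T}$.)

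\emph{Part (i).} I argue the contrapositive: if $\dim K<\alpha$ then any distribution $S$ with $\operatorname{supp}S\subset K$ and $\sum_n|\widehat S(n)|^2w(n)<\infty$, $w(n):=(|n|^{1-\alpha}+1)^{-1}$, must vanish. For smooth $\phi$, Cauchy--Schwarz gives $|\langle S,\phi\rangle|^2\le\big(\sum_n|\widehat S(n)|^2 w(n)\big)\,\|\phi\|_*^2$ with $\|\phi\|_*^2:=\sum_n|\widehat\phi(n)|^2/w(n)\asymp\|\phi\|_{H^{(1-\alpha)/2}}^2$. Now invoke the classical fact that a compact of Hausdorff dimension $<\alpha$ has vanishing Riesz $\alpha$-capacity, equivalently vanishing Bessel capacity of order $(1-\alpha)/2$: there are $\phi_\varepsilon\in C^\infty(\mathbb{T})$, $\phi_\varepsilon\equiv1$ near $K$, with $\|\phi_\varepsilon\|_*\to0$. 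Fix $n$. Since $\operatorname{supp}S\subset K$ and $\phi_\varepsilon=1$ near $K$ we have $\widehat S(n)=\langle S,e^{-in\cdot}\rangle=\langle S,e^{-in\cdot}\phi_\varepsilon\rangle$; and, shifting indices and using the subadditivity of $t\mapsto t^{1-\alpha}$ on $[0,\infty)$, $\|e^{-in\cdot}\phi_\varepsilon\|_*^2=\sum_k|\widehat\phi_\varepsilon(k)|^2(|k-n|^{1-\alpha}+1)\le(1+|n|^{1-\alpha})\,\|\phi_\varepsilon\|_*^2\to0$. Hence $\widehat S(n)=0$ for all $n$, i.e. $S=0$. (When $\alpha=1$ this is simpler still: $\dim K<1$ forces $|K|=0$, $\|\cdot\|_*=\|\cdot\|_{L^2}$, and $\phi_\varepsilon$ may be taken equal to $1$ near $K$ and supported in a neighbourhood of $K$ of measure $<\varepsilon$.)

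\emph{Main obstacle.} Everything above is formal except the capacity statement --- producing the small-norm cutoffs $\phi_\varepsilon$. Given $\dim K<\beta<\alpha$ one has $\mathcal H^\beta(K)=0$, so $K$ is covered by intervals $I_j$ with $\sum_j|I_j|^{\beta}<\varepsilon$; the obvious candidate $\phi_\varepsilon=\sum_j\psi_{I_j}$ ($\psi_I$ a fixed bump profile rescaled to $I$) is $\ge1$ near $K$ (after a harmless smooth truncation, $\equiv1$ near $K$) and satisfies $\|\psi_I\|_*^2\asymp|I|^{\alpha}$. The triangle inequality alone only yields $\|\phi_\varepsilon\|_*\lesssim\sum_j|I_j|^{\alpha/2}$, which is controlled by $\varepsilon$ only when $\beta\le\alpha/2$; in general one must use that the $\psi_{I_j}$ are nearly orthogonal in $H^{(1-\alpha)/2}$. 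The clean route is to draw the cover from a dyadic grid, so that intervals of a fixed length $2^{-k}$ are disjoint, and to note that for the smoothness index $s=(1-\alpha)/2\le\tfrac12$ disjointly supported bumps interact negligibly, giving $\|\phi_\varepsilon\|_*^2\lesssim\sum_j|I_j|^{\alpha}\le\sum_j|I_j|^{\beta}<\varepsilon$; alternatively one simply cites the classical comparability of Hausdorff dimension with Riesz/Bessel capacity, which packages precisely this estimate. This near-orthogonal summation over the scales of a Hausdorff cover is where the one real difficulty lies; the rest is bookkeeping.
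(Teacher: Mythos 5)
Your argument is essentially correct, but note that the paper does not prove this statement at all: Theorem A is quoted as classical (Beurling \cite{B49}, \cite[p.~40]{KS94}), so there is no internal proof to compare against. What you give is the standard potential-theoretic proof. Part (ii) is indeed just Frostman's lemma plus the dyadic-shell energy estimate and the comparison of the Riesz $\alpha$-energy with the weighted Fourier sum (and it is vacuous for $\alpha=1$), and your monotonicity-in-$\alpha$ remark is fine. Part (i) via Cauchy--Schwarz against modulated cutoffs of small $H^{(1-\alpha)/2}$-norm is sound: the modulation bound using subadditivity of $t^{1-\alpha}$ is correct, and $\widehat S(n)=\langle S, e^{-in\cdot}\phi_\varepsilon\rangle$ is legitimate once $\phi_\varepsilon$ is smooth and $\equiv1$ near $K$. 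The only place needing more care than you give it is the construction of the cutoffs: drawing the cover from a dyadic grid makes intervals of a \emph{fixed} scale disjoint, but intervals of different scales nest, so the overlap of the bumps' supports is not bounded and the quasi-orthogonality claim does not follow as stated. The standard fix is to prune the dyadic cover to its maximal intervals (dyadic intervals are nested or disjoint, so the maximal ones are pairwise disjoint, and discarding only decreases $\sum|I_j|^\beta$), then use bumps equal to $1$ on $I_j$ and supported on slight enlargements, which have overlap at most $2$; the pointwise Gagliardo-seminorm argument then gives $\|\sum_j\psi_{I_j}\|^2_{H^s}\lesssim\sum_j|I_j|^{\alpha}$ for any $0<s<1$ (your restriction $s\le\tfrac12$ is not needed), after which a Lipschitz truncation and mollification produce the smooth $\phi_\varepsilon\equiv1$ near $K$. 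Alternatively, as you say, one can cite the classical equivalence of the energy and variational definitions of $\alpha$-capacity together with ``$\dim K<\alpha\Rightarrow C_\alpha(K)=0$''. It is worth observing why the paper relegates this to a citation and works hard elsewhere: your Cauchy--Schwarz step inherently uses the full two-sided sum $\sum_n|\widehat S(n)|^2 w(n)$, so this route cannot yield the paper's actual Theorem \ref{thm:main}, whose one-sided hypothesis forces the quite different mechanism of convolution with a Salem measure (Lemma \ref{Sa:lem}) plus Berman's theorem.
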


We will show that a one-side estimate of $\widehat{S}$ is already
sufficient in \ref{enu:S->dim}. 
\begin{thm}
\label{thm:main}Suppose that there is a non-trivial Schwartz distribution
$S$ supported on $K$ with\begin{equation}
\sum_{n=-\infty}^{-1}\frac{|\widehat{S}(n)|^{2}}{|n|^{1-\alpha}}<\infty.\label{eq:one-sided}\end{equation}
Then $\dim K\ge\alpha$.
\end{thm}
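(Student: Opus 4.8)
The plan is to argue by contradiction. Suppose $\dim K<\alpha$; I will deduce that $S$ satisfies the full two-sided estimate (\ref{eq:twosided}) at every level $\alpha'<\alpha$, whence part~\ref{enu:S->dim} of Theorem~A gives $\dim K\ge\alpha'$ for all such $\alpha'$, and therefore $\dim K\ge\alpha$, a contradiction. Two harmless reductions come first. Since $\dim K<\alpha\le1$, the set $K$ has zero Lebesgue measure, so $\mathbb{T}\setminus K$ is a dense open set on each arc of which $S$ vanishes; also $K=\mathbb{T}$ is trivial, so assume $K\neq\mathbb{T}$. Next, replacing $S$ by $e^{imx}S$ ($m\in\mathbb{Z}$) leaves the support and the convergence in (\ref{eq:one-sided}) unchanged; and if $\widehat S(n)=0$ for all $n\le0$ then $S$ is purely analytic, a case the reflection argument below rules out (a purely analytic distribution vanishing on an arc is $0$). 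Hence after a suitable twist we may also assume $\widehat S(0)\ne0$.

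The key object is the Cauchy transform $\Phi=\mathcal{C}S$: the function holomorphic on $\widehat{\mathbb{C}}\setminus K$ with $\Phi(\infty)=0$, given by $\Phi(z)=\sum_{n\ge0}\widehat S(n)z^{n}$ for $|z|<1$ and $\Phi(z)=-\sum_{m\ge1}\widehat S(-m)z^{-m}$ for $|z|>1$, whose jump across $\mathbb{T}$ is $S$. Because $S$ vanishes on each arc of $\mathbb{T}\setminus K$, the two one-sided boundary values of $\Phi$ agree there, so a one-dimensional edge-of-the-wedge argument shows $\Phi$ continues holomorphically across each such arc; in particular $\Phi$ is holomorphic on the connected domain $\widehat{\mathbb{C}}\setminus K$, so if its outer part vanished identically then $\Phi\equiv0$ and $S=0$ (this is the reflection argument used above). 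Now the content of (\ref{eq:one-sided}) is exactly that $(\widehat S(-m))_{m\ge1}$ lies in a weighted $\ell^{2}$ space, which yields the Hardy--Sobolev-type growth bound $\int_{|z|=r}|\Phi(z)|^{2}\,|dz|\lesssim (r-1)^{\alpha-1}$ for $r>1$; this is a routine Cauchy--Schwarz estimate for the defining series.

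The heart of the argument is to transfer this control across $K$ into the inner disk, that is, to prove $\int_{|z|=r}|\Phi(z)|^{2}\,|dz|\lesssim (1-r)^{\alpha'-1}$ for every $\alpha'<\alpha$ and every $r<1$. This is where the smallness of $K$ enters: fix $\beta$ with $\dim K<\beta<\alpha$ and cover $K$ by arcs $J_{i}\subset\mathbb{T}$ with $\sum_{i}|J_{i}|^{\beta}$ arbitrarily small. Away from $\bigcup_{i}J_{i}$ the function $\Phi$ is holomorphic in a full neighbourhood of $\mathbb{T}$, so the outer bound reflects to an equally good inner bound at a fixed distance from the circle; the contribution of a neighbourhood of each $J_{i}$ is estimated by a maximum-principle (harmonic-majorant) argument on $\widehat{\mathbb{C}}\setminus\bigcup_{i}J_{i}$, and the sum of these contributions stays finite because the gain $\alpha-\beta>0$ outweighs the growth exponent. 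Making this transfer quantitative, sharply at the dimensional threshold $\alpha$, is the step I expect to demand the most care and is really the whole point of the one-sided phenomenon.

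Granting the inner bound, it is standard that $\int_{|z|=r}|\Phi(z)|^{2}\,|dz|\lesssim(1-r)^{\alpha'-1}$ forces $\sum_{n\ge1}|\widehat S(n)|^{2}/n^{1-\alpha''}<\infty$ for every $\alpha''<\alpha'$; since $\alpha'<\alpha$ was arbitrary, $\sum_{n\ge1}|\widehat S(n)|^{2}/n^{1-\alpha'}<\infty$ for all $\alpha'<\alpha$. Combined with (\ref{eq:one-sided}), which holds at each level $\alpha'<\alpha$ as well, and with the single finite term at $n=0$, this says precisely that $S$ satisfies (\ref{eq:twosided}) with $\alpha'$ in place of $\alpha$. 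Part~\ref{enu:S->dim} of Theorem~A then yields $\dim K\ge\alpha'$ for every $\alpha'<\alpha$, hence $\dim K\ge\alpha$, contradicting the assumption and proving the theorem.
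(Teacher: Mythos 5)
Your outer estimate $\int_{|z|=r}|\Phi|^{2}\,|dz|\lesssim(r-1)^{\alpha-1}$ and the final Tauberian step (inner bound $(1-r)^{\alpha'-1}$ implies $\sum_{n\ge1}|\widehat S(n)|^{2}n^{\alpha''-1}<\infty$ for $\alpha''<\alpha'$) are indeed routine, and the logical skeleton (contradict, deduce the two-sided condition (\ref{eq:twosided}) at every level $\alpha'<\alpha$, invoke Theorem A) would close the argument. But the step you yourself flag as ``the whole point'' --- transferring the growth bound across $K$ --- is not proved, and the mechanism you propose does not work as stated. A maximum-principle or harmonic-majorant argument on $\widehat{\mathbb{C}}\setminus\bigcup_{i}J_{i}$ needs control of $\Phi$ on the \emph{whole} boundary of that domain, and on the inner side of the slits $J_{i}$ the only a priori information is the polynomial growth $|\Phi(z)|\le C(1-|z|)^{-C}$ coming from $S$ being a Schwartz distribution; nothing bounds $\Phi$ there by the outer data. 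Getting a conclusion from slow growth plus an uncontrolled exceptional boundary set is exactly the content of the Dahlberg--Berman Phragm\'en--Lindel\"of theorem quoted in the paper, and even that machinery, with the gauge $\omega(x)=x\log(1/x)$ forced by the logarithmic growth of $\log|\Phi|$, yields only a qualitative $H^{2}$-type conclusion, not the quantitative $(1-r)^{\alpha'-1}$ bound your plan requires at levels $\alpha'<1$. A second, independent obstruction: your covering uses Hausdorff content ($\sum_i|J_{i}|^{\beta}$ small, arcs of wildly different lengths), whereas an estimate of $\int_{|z|=r}$ is a single-scale statement at scale $1-r$ and would need box-counting control of $K$ at that scale; small Hausdorff dimension does not give small Minkowski dimension, and the paper is explicit that this distinction is essential (it is the reason lemma \ref{Sa:lem} insists on the Minkowski dimension of the Salem set and lemma \ref{lem:K+E} mixes the two dimensions).

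The paper's proof sidesteps the quantitative transfer you are attempting. Assuming $\dim K<\alpha$, it convolves $S$ with (a twist of) a Salem measure $\mu_{\beta}$, $\beta=1-\alpha$, supported on a set $E$ of Minkowski dimension $<1-\dim K$ (lemma \ref{Sa:lem}); the product $|\widehat S(n)|\,|\widehat\mu(n)|$ turns the one-sided weighted condition (\ref{eq:one-sided}) into a plain $l^{2}(\mathbb{Z}^{-})$ condition for $T=S*(e^{imt}\mu)$, while lemma \ref{lem:K+E} keeps $\dim_{\Haus}(K+E)<1$. This reduces everything to the case $\alpha=1$, where only the qualitative statement is needed: by Berman's theorem, $F(z)=\sum_{n\ge0}\widehat T(n)z^{n}$ lies in $H^{2}$, so $T\in L^{2}$ is supported on a null set and vanishes (lemma \ref{lem:Berman}), a contradiction. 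Note also that your stronger intermediate claim --- that (\ref{eq:one-sided}) forces (\ref{eq:twosided}) at all lower levels --- is nowhere established in the paper and is far from obvious (for $\alpha=1$, theorem \ref{thm:thickness} shows the one-sided and two-sided conditions genuinely part ways as far as the size of the support is concerned), so deriving it cannot be waved through as a ``routine'' harmonic-analysis step; as written, the proposal has a genuine gap at its core.
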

We denote by $\mathcal{D}^{*}$ the space of Schwartz distributions,
and by $\mathcal{D}^{*}(K)$ the space of Schwartz distributions supported
on some compact $K$.
\begin{cor*}
If there is an $S\in\mathcal{D}^{*}(K)$ with $\widehat{S}(n)=O(|n|^{-\alpha/2})$,
$n<0$ then $\dim K\ge\alpha$.
\end{cor*}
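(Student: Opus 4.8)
The plan is to derive the Corollary from Theorem~\ref{thm:main} by sacrificing an arbitrarily small amount in the exponent; the only point to watch is that one cannot feed the critical exponent $\alpha$ directly into \eqref{eq:one-sided}. Indeed, the hypothesis $\widehat S(n)=O(|n|^{-\alpha/2})$ for $n<0$ gives $|\widehat S(n)|^{2}\le C|n|^{-\alpha}$, hence $|\widehat S(n)|^{2}/|n|^{1-\alpha}=O(1/|n|)$, and $\sum_{n\le-1}1/|n|$ diverges — so Theorem~\ref{thm:main} does not apply with this value of $\alpha$.

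The remedy is to invoke the theorem with a slightly smaller exponent. Fix any $\alpha'$ with $0<\alpha'<\alpha$ (and $\alpha'\le1$, which is automatic once $\alpha'<\alpha$ in the relevant range). Then
\[
\sum_{n=-\infty}^{-1}\frac{|\widehat S(n)|^{2}}{|n|^{1-\alpha'}}\le C\sum_{n=1}^{\infty}n^{-\alpha}\,n^{\alpha'-1}=C\sum_{n=1}^{\infty}n^{\alpha'-\alpha-1}<\infty,
\]
since $\alpha'-\alpha-1<-1$. As $S$ is a non-trivial distribution supported on $K$, Theorem~\ref{thm:main} gives $\dim K\ge\alpha'$. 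Letting $\alpha'\uparrow\alpha$ yields $\dim K\ge\alpha$, which is the assertion of the Corollary. (If one allows $\alpha>1$ in the statement, the same argument applied to every $\alpha'<1$ gives $\dim K\ge1$, and $\dim K\le1$ always, so that range needs no separate treatment.)

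I do not expect a genuine obstacle here: this is the standard passage from a non-strict hypothesis to a strict one by losing an $\varepsilon$, and Theorem~\ref{thm:main} carries all the real content. The only thing worth emphasizing in the write-up is precisely \emph{why} the direct substitution fails (the endpoint divergence of the harmonic-type series), so that the reader sees the role of the approximation $\alpha'\uparrow\alpha$.
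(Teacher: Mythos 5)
Your argument is correct and is exactly the intended derivation: since $|\widehat S(n)|^{2}/|n|^{1-\alpha}$ is only $O(1/|n|)$ at the endpoint, one applies Theorem~\ref{thm:main} with every $\alpha'<\alpha$ and lets $\alpha'\uparrow\alpha$, which is precisely how the corollary follows from the theorem in the paper (which states it without separate proof). No issues.
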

Somewhat reminiscent results have to do with symmetry properties of
individual measures. For example, a theorem of Rajchman (\cite{R29}
or \cite[\S 1.4]{KL87}) states that for any given complex measure
$\mu$, if $\lim_{n\to\infty}\widehat{\mu}(n)=0$ then it follows
that $\lim_{n\to-\infty}\widehat{\mu}(n)=0$. See \cite{dLK70} for
an interesting generalization. Another interesting result is that
of Hru\v s\v cev and Peller \cite[Corollary 3.26]{HP86}, which state
that for any complex measure $\mu$, if \[
\sum_{n=1}^{\infty}\frac{|\widehat{\mu}(n)|^{2}}{n}<\infty\]
 then also\[
\sum_{n=-\infty}^{-1}\frac{|\widehat{\mu}(n)|^{2}}{|n|}<\infty.\]
Such a result cannot hold with $|n|$ replaced by $|n|^{1-\alpha}$
for some $\alpha>0$. Indeed, \[
\sum_{n=1}^{\infty}\frac{e^{int}}{n^{1-\alpha}}\in L^{1}\quad\forall\alpha>0\]
(see e.g.\ \cite[chap. V]{Z02} where the $\cos$ and $\sin$ terms
are handled independently). Hence one can construct $L^{1}$ examples
with any desired polynomial tail decay on the left and independently
on the right.

\subsection*{Question}

Does this symmetry result still hold if $n$ is replaces by $n\log n$?\medskip{}

\begin{flushleft}
It is occasionally useful to have the following terminology
\par\end{flushleft}
\begin{defn*}
For a Schwartz distribution $S$ we define its analytic part as the
distribution $S'$ satisfying \[
\widehat{S'}(n)=\begin{cases}
\widehat{S}(n) & n\ge0\\
0 & \mbox{otherwise.}\end{cases}\]
The anti-analytic part is $S-S'$.
\end{defn*}

\subsection*{Proof of theorem \ref{thm:main}}

Let us first reduce the general case to the case that $\alpha=1$.
Suppose by contradiction that \begin{equation}
\dim K<\alpha.\label{eq:dimK<}\end{equation}
By convolution with an appropriate measure we want to prepare a new
distribution, $S'$ supported by a compact of dimension $<1$ such
that the anti-analytic part of $S'$ would belong to $L^{2}(\mathbb{T})$.
This will lead to a contradiction. The measure will be taken from
the following result of Salem:
\begin{lem}
\label{Sa:lem}For every $\delta$ , $0<\delta<1$ there is a compact
set $E\subset\mathbb{T}$ such that
\begin{enumerate}
\item $\dim_{\Mink}(E)\leq\delta$;
\item \label{enu:dimF}For every $\beta<\delta$ there exists a positive
measure $\mu_{\beta}$ supported on $E$ such that $|\widehat{\mu_{\beta}}(n)|\le C|n|^{-\beta/2}$.
\end{enumerate}
\end{lem}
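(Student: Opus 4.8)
\emph{Proof proposal.} The plan is to obtain $E$ as a \emph{random homogeneous Cantor set} and to take for every $\mu_\beta$ the natural probability measure it carries. This is in essence Salem's classical construction; the only point that needs a separate (and easy) argument is the box-counting bound (i), which will be transparent because all the intervals surviving at a given stage have the same length.

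Fix integers $M_k\ge 2$ and $m_k$ with $1\le m_k\le M_k$ so that $M_k\to\infty$, $\log M_{k+1}=o(\log(M_1\cdots M_k))$, and $\log(m_1\cdots m_k)/\log(M_1\cdots M_k)\to\delta$ (for instance $M_k=k+1$, with $m_k$ chosen greedily so that $m_1\cdots m_k$ stays just below $(M_1\cdots M_k)^{\delta}$). Construct $\mathbb T=E_0\supset E_1\supset\cdots$ with $E_k$ a disjoint union of $N_k:=m_1\cdots m_k$ closed intervals of common length $\ell_k:=(M_1\cdots M_k)^{-1}$, where $E_k$ is produced from $E_{k-1}$ by dividing each stage-$(k-1)$ interval into $M_k$ equal closed pieces and retaining a uniformly random $m_k$-element subset of them, all choices made independently. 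Set $E:=\bigcap_k E_k$, and let $\mu$ be the weak limit of the probability measures $|E_k|^{-1}\mathbf{1}_{E_k}\,dx$, so that $\mu$ gives each stage-$k$ interval mass $1/N_k$. For (i): whenever $\ell_j\le r<\ell_{j-1}$ the set $E\subset E_j$ is covered by at most $2N_j$ intervals of length $r$, hence $\dim_{\Mink}E\le\limsup_j(\log 2N_j)/\log(1/\ell_{j-1})=\limsup_j(\log N_{j-1}+\log m_j)/\log(1/\ell_{j-1})=\delta$, using $\log m_j\le\log M_j=o(\log(1/\ell_{j-1}))$ together with $\log N_{j-1}/\log(1/\ell_{j-1})\to\delta$.

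For (ii), fix $\beta<\delta$, choose $\delta'\in(\beta,\delta)$ and an integer $p$ large enough that $p(\delta'-\beta)>1$ (this makes the Borel--Cantelli series below converge); it is convenient to regard $\mu$ as a measure on $[0,1]\subset\mathbb R$ with $\widehat\mu$ its Euclidean Fourier transform, so that rescaling closes the recursion below, restricting to integer frequencies only at the end. The crux is the moment bound
\[
\mathbb E\,\bigl|\widehat\mu(\xi)\bigr|^{2p}\ \le\ C_p\,|\xi|^{-p\delta'}\qquad(\xi\ne 0),
\]
which I would establish, following Salem, from the self-similar description $\mu=\tfrac1{m_1}\sum_{j\in\mathcal{A}_1}\mu_j$, where $\mathcal{A}_1$ is a uniform random $m_1$-subset of $\{0,\dots,M_1-1\}$ and the $\mu_j$ ($j\in\mathcal{A}_1$) are i.i.d.\ copies of a measure of the same type, affinely carried into the $j$-th stage-$1$ piece. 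Expanding $\mathbb E|\widehat\mu(\xi)|^{2p}=\mathbb E\int e^{-2\pi i\xi(x_1+\cdots+x_p-y_1-\cdots-y_p)}\,d\mu(x_1)\cdots d\mu(y_p)$ and grouping the $2p$ variables according to which stage-$1$ piece they fall in, the ``diagonal'' term (all variables in one piece) reproduces $\mathbb E|\widehat\mu(\xi/M_1)|^{2p}$ with a gain of $m_1^{-(2p-1)}$, while the off-diagonal terms, instead of contributing an $O(1)$ amount, are damped by the cancellation in the stage-$1$ exponential sum $\tfrac1{m_1}\sum_{j\in\mathcal{A}_1}e^{-2\pi i\xi j\ell_1}$ (whose mean has modulus $\lesssim(M_1\|\xi\ell_1\|)^{-1}$). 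Iterating this recursion down to the stage $k$ with $M_1\cdots M_k$ just above $|\xi|$, where $N_k\ge|\xi|^{\delta'}$, produces the asserted power of $|\xi|$.

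Granting the moment bound, $\sum_{n\ne 0}\Pr(|\widehat\mu(n)|>|n|^{-\beta/2})\le\sum_{n\ne 0}|n|^{p\beta}\,\mathbb E|\widehat\mu(n)|^{2p}\le C_p\sum_{n\ne 0}|n|^{-p(\delta'-\beta)}<\infty$, so by Borel--Cantelli, almost surely $|\widehat\mu(n)|\le|n|^{-\beta/2}$ for all but finitely many integers $n$, and therefore $|\widehat\mu(n)|\le C|n|^{-\beta/2}$ for all $n$ with a random, a.s.\ finite $C$; we put $\mu_\beta:=\mu$ (and, intersecting over a sequence $\beta\uparrow\delta$, a single realisation of $E$ serves all $\beta<\delta$ at once). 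I expect the only real obstacle to be the moment bound itself: one must keep the off-diagonal terms under control at every stage of the iteration so that a full power $|\xi|^{-p\delta'}$ is collected rather than something weaker, and make the estimate uniform in $\xi$; the construction, the weak convergence of the approximating measures, and the Minkowski bound (i) are routine.
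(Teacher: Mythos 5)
Your plan is genuinely different from the paper's: the paper does not re-prove the Salem property at all --- it takes Kahane's theorem that Brownian images of suitable Cantor sets are Salem and only verifies, by a covering-number/Chebyshev argument, that the image has the right Minkowski dimension --- whereas you rebuild a Salem-type measure from scratch on a homogeneous random Cantor set, for which the bound (i) is indeed immediate. Your reduction of (ii) to a high-moment estimate plus Borel--Cantelli, and the use of one realisation of $E$ for all $\beta<\delta$ via a sequence $\beta_i\uparrow\delta$, are fine. The gap is exactly where you placed it: the bound $\mathbb{E}|\widehat{\mu}(\xi)|^{2p}\le C_p|\xi|^{-p\delta'}$ is not proved, and the mechanism you sketch for the off-diagonal terms fails for the construction you chose.

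Because your subintervals sit on a fixed lattice and only the choice of retained cells is random, the quantity you rely on, $\mathbb{E}\,\frac{1}{m_1}\sum_{j\in\mathcal{A}_1}e^{-2\pi i\xi j\ell_1}=\frac{1}{M_1}\sum_{j=0}^{M_1-1}e^{-2\pi i\xi j\ell_1}$, has modulus $1$ whenever $\xi$ is a multiple of $M_1$ (the bound $\lesssim(M_1\|\xi\ell_1\|)^{-1}$ is vacuous there), and the same degeneration occurs at every stage $j\le k$ when $\xi$ is a multiple of $M_1\cdots M_k$ --- i.e.\ at infinitely many of the integer frequencies you must control. At such $\xi$ your recursion collects no damping of the off-diagonal terms from any of the first $k$ generations, and the iteration collapses. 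The decay is in fact still true there, but by a mechanism your sketch never invokes: conditionally on the first $k$ generations, the continuation measures in the $N_k$ surviving cells are independent with conditional expectation equal to normalized Lebesgue measure on the cell, whose Fourier transform vanishes at $\xi\in\ell_k^{-1}\mathbb{Z}$; hence $\widehat{\mu}(\xi)$ is an average of $N_k\approx|\xi|^{\delta}$ independent centred bounded variables and obeys the right moment bound --- and one must then interpolate between this argument and the exponential-sum argument at near-resonant frequencies, uniformly over all stages. So either you carry out this genuinely more delicate estimate, or you change the construction so that your computation becomes literally correct: place the $m_k$ subintervals at independent, continuously uniformly distributed positions inside each parent interval (as in Salem's original construction, and in the spirit of the random shifts $s(n,k)$ used in Section II of the paper); then the relevant conditional expectations decay like $(|\xi|\ell_{k-1})^{-1}$ with no arithmetic exceptions and the recursion closes, while the Minkowski bound (i) survives unchanged since all intervals of a given rank still have equal length. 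The paper's remark after the lemma --- that Salem's original construction also yields the correct Minkowski dimension --- is essentially this repair.
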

This result is essentially due to Salem but all references we could
find were for the usual Hausdorff dimension while we need the Minkowski
dimension. We recall that $\dim_{\Mink}$, the Minkowski dimension
(also known as the upper box dimension) is defined by \[
\dim_{\textrm{Mink}}(E)=\varlimsup_{\epsilon\to0}\frac{\log\Cov(E;[0,\epsilon])}{\log1/\epsilon}\]
where $\Cov(E;A)$ is the cover number of the set $E$ by the set
$A$ i.e.\ the minimal number of translates of $A$ required to cover
$E$. Directly from the definition we see that $\dim_{\Mink}(E)\geq\dim_{\Haus}E$.
Of course, by Frostman's theorem clause \ref{enu:dimF} implies that
$\dim_{\Haus}\ge\delta$ so in fact the set $E$ of lemma \ref{Sa:lem}
satisfies $\dim_{\Haus}(E)=\dim_{\Mink}(E)=\delta$. A common way
to refer to Frostman and Salem's results in the literature is via
the notion of the Fourier dimension \cite[\S 17]{K85}:
\begin{defn*}
\label{def:Fdim}For a compact set $K\subset\mathbb{T}$ the supremum
of numbers $a$ such that $K$ supports a positive (non-zero) measure
$\mu$ with $|\widehat{\mu}(u)|^{2}\le C/|u|^{a}$ is known as the
Fourier dimension of $K$. We denote it by $\dim_{\F}(K)$.
\end{defn*}
With this definition we have the following succinct statements

\emph{Frostman's theorem}: $\dim_{\F}(E)\le\dim_{\Haus}(E)$. 

\emph{Salem's theorem}: For every $0<\delta<1$ there exists some
$E$ with \[
\dim_{\F}(E)=\dim_{\Haus}(E)=\delta.\]

\emph{Lemma \ref{Sa:lem}}: For every $0<\delta<1$ there exists some
$E$ with \[
\dim_{\F}(E)=\dim_{\Haus}(E)=\dim_{\Mink}(E)=\delta.\]

\begin{proof}
[Proof of lemma \ref{Sa:lem}]By \cite[chap.\ 17, thm.\ 4]{K85} the
image of any set of dimension $\delta/2$ by a Brownian bridge is
a Salem set. Recall that a Brownian bridge is $B=W+L$ where $W$
is usual Brownian motion while $L$ is a linear term that makes $B$
continuous on the circle (see e.g.\ \cite[\S 16.3 \& \S 17.5]{K85}
where the Brownian bridge is called {}``the Wiener function'').
Thus it remains to show that the Brownian image of, for example, a
Cantor-like set has the required Minkowski dimension, almost surely.
Now, our Cantor set $K$ can be covered by intervals $I_{1},\dotsc,I_{N}$
($N$ being a power of $2$) of length $N^{-2/\delta}$. By definition,
the image of each $I_{i}$ by Brownian motion is an interval, whose
length has expectation $\le CN^{-1/\delta}$. Let $X_{i}$ be the
number of intervals of length $N^{-1/\delta}$ required to cover $WI_{i}$.
Then $X_{i}$ are i.i.d.\ random variables with expectation some
constant $C$. We get \[
\mathbb{E}\Cov\bigg(\bigcup_{i=1}^{N}WI_{i};[0,N^{-1/\delta}]\bigg)\le\sum_{i}\mathbb{E}X_{i}=NC\]
The linear term $L$ is negligible in comparison: if $\Cov(BI_{i};[0,N^{-1/\delta}])>\Cov(WI_{i};\linebreak[0][0,\linebreak[0]N^{-1/\delta}])+1$
then this means that the derivative of the linear term must be $>N^{-1/\delta}$
which can only happen for a finite number of $N$. Hence for all $N$
sufficiently large \[
\mathbb{E}\Cov(BK;[0,N^{-1/\delta}])\le\mathbb{E}\Cov\bigg(\bigcup_{i=1}^{N}BI_{i};[0,N^{-1/\delta}]\bigg)\le NC\]
and by Tchebyshev's inequality, for any $\eta>0$,\[
\mathbb{P}(\Cov(BK;[0,N^{-1/\delta}])>N^{1+\eta})\le CN^{-\eta}.\]
Summing this over $N=2^{n}$ for all $n$ we get that with probability
$1$, for all $N$ sufficiently large $\Cov(BK;[0,N^{-1/\delta}])\le N^{1+\eta}$.
By the monotonicity of the covering numbers we can move from a specific
sequence of $\epsilon$, $\epsilon=2^{-n/\delta}$ to a general $\epsilon$
and lose only a constant. Hence\[
\Cov(BK;[0,\epsilon])\le C\epsilon^{-\delta-\eta\delta}\]
So $\dim_{\Mink}(BK)\le\delta+\eta\delta$. Since $\eta>0$ was arbitrary,
the lemma is proved.
\end{proof}
We remark that we use Kahane's approach to the construction of a Salem
set for convenience only. In fact one may check that Salem's original
approach (\cite[chap.\ VIII]{KS94} or \cite[\S 3]{M00}) as well
as Kaufman's \cite{K81} also give sets with the correct Minkowski
dimension.

We need to consider the Minkowski dimension due to the following lemma 
\begin{lem}
\label{lem:K+E}For any compacts $K$, $E$, \[
\dim_{\Haus}(K+E)\leq\dim_{\Haus}K+\dim_{\Mink}(E).\]

\end{lem}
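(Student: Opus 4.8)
The plan is to bound the Hausdorff measure of $K+E$ in the appropriate dimension $s = \dim_{\Haus}K + \dim_{\Mink}(E) + \epsilon$ by covering it efficiently. Fix $\epsilon > 0$ and write $a = \dim_{\Haus}K$, $b = \dim_{\Mink}(E)$. First I would pick a scale $\delta > 0$ small. Since $b = \dim_{\Mink}(E)$, for all sufficiently small $\delta$ we can cover $E$ by at most $\delta^{-b-\epsilon}$ intervals of length $\delta$. Since $a = \dim_{\Haus}K$, the $(a+\epsilon)$-dimensional Hausdorff measure of $K$ is zero, so we can choose a cover of $K$ by intervals $\{J_k\}$ with $\sum_k |J_k|^{a+\epsilon} < 1$; by splitting longer intervals into pieces we may assume every $|J_k| \le \delta$ as well (splitting an interval of length $\ell$ into $\lceil \ell/\delta\rceil$ pieces only decreases $\sum |J_k|^{a+\epsilon}$ when $a+\epsilon < 1$, which we may assume, treating the trivial case $a + \epsilon \ge 1$ separately since then $s \ge 1$ and the statement is vacuous on $\mathbb{T}$).

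The key step is the covering estimate for the sumset. For each interval $J_k$ in the cover of $K$ and each of the $\le \delta^{-b-\epsilon}$ intervals $I_j$ of length $\delta$ covering $E$, the sum $J_k + I_j$ is an interval of length $|J_k| + \delta \le 2\delta$, and these intervals cover $K+E$. To estimate the Hausdorff sum in dimension $s = a + b + 2\epsilon$, I would compute
\[
\sum_{k}\sum_{j} \big(|J_k| + \delta\big)^{s} \le \sum_k \sum_j (2\delta)^{s} \le \delta^{-b-\epsilon}\cdot (2\delta)^{a+b+2\epsilon}\cdot \#\{k\}.
\]
This is not quite right because I have not controlled $\#\{k\}$, so instead I would split each $J_k$ (if necessary) further into subintervals of length exactly comparable to $\delta$ — so $J_k$ contributes roughly $|J_k|/\delta$ subintervals each of length $\le 2\delta$ — and then the contribution of $J_k$ to the sumset cover is $(|J_k|/\delta)\cdot \delta^{-b-\epsilon}$ intervals of length $\le 2\delta$, giving total Hausdorff sum bounded by
\[
\sum_k \frac{|J_k|}{\delta}\cdot \delta^{-b-\epsilon}\cdot (2\delta)^{a+b+2\epsilon} = 2^{a+b+2\epsilon}\,\delta^{a+\epsilon-1}\sum_k |J_k| \le 2^{a+b+2\epsilon}\,\delta^{a+\epsilon-1}\sum_k |J_k|^{a+\epsilon},
\]
using $|J_k| \le \delta \le 1$ and $a + \epsilon < 1$ so that $|J_k| \le |J_k|^{a+\epsilon}$; wait — this last inequality goes the wrong way. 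The clean fix is to not renormalize and instead track things scale by scale.

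The robust version of the argument: for the cover $\{J_k\}$ of $K$ with $|J_k| \le \delta$ and $\sum|J_k|^{a+\epsilon} < 1$, replace each $J_k$ by $\lceil |J_k|/\delta\rceil$ intervals of length $\delta$; the number of length-$\delta$ intervals needed to cover $K$ is then $M \le \sum_k (|J_k|/\delta + 1) \le \delta^{-1}\sum_k |J_k| + \#\{k\}$. The point is that $\sum_k |J_k|^{a+\epsilon} < 1$ with all $|J_k| \le \delta$ forces, after regrouping by dyadic length, $M \lesssim \delta^{-(a+\epsilon)}$ — this is exactly the elementary fact that a set with finite $(a+\epsilon)$-Hausdorff content has upper box dimension controlled at a fixed scale only up to this bound, which needs the cover to be taken at scale $\delta$ from the start: since $\mathcal{H}^{a+\epsilon}(K)=0$ we may choose a $\delta$-cover (all pieces of length $\le \delta$) with $\sum |J_k|^{a+\epsilon}<\delta^{?}$... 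The honest statement I will use is: $\mathcal{H}^{a+\epsilon}(K) = 0$ implies $\limsup_{\delta\to 0}\Cov(K;[0,\delta])\,\delta^{a+\epsilon} = 0$ fails in general, so instead I work directly with Hausdorff content of the product. Concretely: given the $\delta$-cover $\{J_k\}$ of $K$ with $\sum |J_k|^{a+\epsilon}<1$ and the $\le C\delta^{-b-\epsilon}$ intervals $I_j$ of length $\delta$ covering $E$, the $|J_k|+\delta \le 2|J_k|$-length... no: pair $J_k$ with each $I_j$ to get $C\delta^{-b-\epsilon}$ intervals of length $\le 2\delta \le 2|J_k|$ covering $J_k + E$... this still needs $|J_k|\ge\delta$.

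Let me state the final clean plan. Choose $\delta$ small, cover $E$ by $N_\delta \le C\delta^{-b-\epsilon}$ intervals of length $\delta$. For $K$, use $\mathcal{H}^{a+2\epsilon}(K)=0$ to get a cover $\{J_k\}$ with each $|J_k|<\delta$ and $\sum_k |J_k|^{a+2\epsilon}<1$. For each $k$, the set $J_k + E$ is covered by the $N_\delta$ intervals $J_k + I_j$, each of length $|J_k| + \delta < 2\delta$. Hence $K+E$ is covered by intervals of length $<2\delta$ and the Hausdorff sum in dimension $s := a + b + 3\epsilon$ is
\[
\sum_k \sum_{j=1}^{N_\delta} (|J_k|+\delta)^{s} \le \sum_k N_\delta (2\delta)^{s} \le 2^{s} C\, \delta^{-b-\epsilon}\delta^{s}\,\#\{k\}.
\]
Here the only gap is $\#\{k\}$; but note $\#\{k\}\le \delta^{-(a+2\epsilon)}\sum_k |J_k|^{a+2\epsilon} < \delta^{-(a+2\epsilon)}$ since each $|J_k|<\delta$ implies $|J_k|^{a+2\epsilon}>\delta^{?}$ — wrong direction again, so instead bound $\sum_k (|J_k|+\delta)^s \le 2^s\sum_k \delta^{s-a-2\epsilon}|J_k|^{a+2\epsilon}$ only valid if $\delta \le |J_k|$... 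The genuinely correct bookkeeping is to \emph{not} force $|J_k|<\delta$ but instead, for each $k$, cover $J_k + E$ by $\lceil |J_k|/\delta + 1\rceil N_\delta$ intervals of length $2\delta$; then since each such interval $\subseteq$ a fixed interval of length $\le 2(|J_k|+\delta)$,
\[
\mathcal{H}^s_{2\delta}(K+E) \le \sum_k \Big(\tfrac{|J_k|}{\delta}+2\Big) N_\delta (2\delta)^s \le 2^{s}C\sum_k (|J_k|+2\delta)\,\delta^{-1-b-\epsilon}\delta^{s} \le C'\, \delta^{s - b - \epsilon - 1}\Big(\sum_k|J_k| + 2\delta\#\{k\}\Big).
\]
Now choose the cover of $K$ so that \emph{all} $|J_k|$ are comparable to $\delta$ (a $\delta$-regular cover at scale $\delta$); then $\#\{k\} = \Cov(K;[0,\delta]) \le \delta^{-a-\epsilon}$ for $\delta$ small (by definition of Hausdorff, upgraded: actually one needs $\dim_{\Haus}$, and $\Cov(K;[0,\delta])\le\delta^{-a-\epsilon}$ infinitely often along a subsequence $\delta_m\to 0$ — which suffices since $\mathcal{H}^s_\delta$ is monotone in $\delta$, so we may pass to that subsequence). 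Then $\sum_k |J_k| \le 2\delta\#\{k\} \le 2\delta^{1-a-\epsilon}$, giving $\mathcal{H}^s_{2\delta}(K+E) \le C'\delta^{s-b-\epsilon-1}\cdot\delta^{1-a-\epsilon} = C'\delta^{s - a - b - 2\epsilon} = C'\delta^{\epsilon} \to 0$ along $\delta = \delta_m$. Hence $\mathcal{H}^s(K+E) = 0$, so $\dim_{\Haus}(K+E) \le s = a + b + 3\epsilon$; let $\epsilon \to 0$.

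The main obstacle, as the above rambling shows, is purely bookkeeping: one must cover $K$ at the \emph{fixed} scale $\delta$ (not by an arbitrary fine Hausdorff cover) so that $\#\{k\} = \Cov(K;[0,\delta])$ is literally the covering number, at which point the definition of Hausdorff dimension gives $\Cov(K;[0,\delta]) \le \delta^{-a-\epsilon}$ along a sequence $\delta_m \to 0$, while the definition of Minkowski dimension gives $\Cov(E;[0,\delta]) \le \delta^{-b-\epsilon}$ for \emph{all} small $\delta$ — and it is precisely this asymmetry (liminf-type bound for $K$, limsup-type bound needed for $E$) that forces the Minkowski dimension on the $E$ side and cannot be relaxed to Hausdorff. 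Once the two covers are in hand, $K+E$ is covered by the pairwise sums, each an interval of length $\le 2\delta$, and counting gives $\Cov(K+E;[0,2\delta]) \le \Cov(K;[0,\delta])\cdot\Cov(E;[0,\delta]) \le \delta^{-a-b-2\epsilon}$ along $\delta_m$, whence $\mathcal{H}^{a+b+3\epsilon}(K+E)=0$ and the lemma follows.
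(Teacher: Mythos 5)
Your final argument has a genuine gap at the step where you pass from $\dim_{\Haus}K=a$ to the covering bound $\Cov(K;[0,\delta_m])\le\delta_m^{-a-\epsilon}$ along some sequence $\delta_m\to0$. Hausdorff dimension gives no control whatsoever on covering numbers at any single scale: the quantity $\liminf_{\delta\to0}\log\Cov(K;[0,\delta])/\log(1/\delta)$ is the \emph{lower box dimension} of $K$, and it can strictly exceed $\dim_{\Haus}K$. For instance, the countable compact $K=\{0\}\cup\{1/\log n:n\ge2\}$ has $\dim_{\Haus}K=0$ but lower box dimension $1$, so for it there is no sequence $\delta_m\to0$ with $\Cov(K;[0,\delta_m])\le\delta_m^{-\epsilon}$ when $\epsilon<1$. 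Consequently what your last paragraph actually proves is $\dim_{\Haus}(K+E)\le\underline{\dim}_{\Mink}(K)+\dim_{\Mink}(E)$, which is a strictly weaker statement than the lemma. The earlier attempts in your write-up run into the same obstruction from the other side (the inequalities $|J_k|\le\delta$ versus $|J_k|\ge\delta$ keep going the wrong way), because a cover of $K$ at one fixed scale simply cannot be extracted from the Hausdorff hypothesis.

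The fix is to keep the Hausdorff cover of $K$ at \emph{varying} scales and adapt the cover of $E$ to each piece: since $\mathcal{H}^{a+\epsilon}(K)=0$, choose intervals $\{J_k\}$ covering $K$ with all $|J_k|$ small enough that $\Cov(E;[0,|J_k|])\le|J_k|^{-b-\epsilon}$ (this uses that the Minkowski bound holds for \emph{every} small scale, which is exactly why $\dim_{\Mink}$ cannot be weakened on the $E$ side) and with $\sum_k|J_k|^{a+\epsilon}$ arbitrarily small. Then $J_k+E$ is covered by at most $|J_k|^{-b-\epsilon}$ intervals of length $2|J_k|$, and with $s=a+b+2\epsilon$,
\begin{equation*}
\sum_k |J_k|^{-b-\epsilon}\,(2|J_k|)^{s}\;=\;2^{s}\sum_k|J_k|^{a+\epsilon},
\end{equation*}
which can be made arbitrarily small while the covering intervals have small diameter; hence $\mathcal{H}^{s}(K+E)=0$ and the lemma follows on letting $\epsilon\to0$. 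Note also that the paper itself proves the lemma differently and more quickly: it quotes Mattila's product inequality $\dim_{\Haus}(K\times E)\le\dim_{\Haus}(K)+\dim_{\Mink}(E)$ and observes that $K+E$ is a Lipschitz image of $K\times E$, under which Hausdorff dimension does not increase.
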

(The Minkowski dimension on the right hand side can not be replaced
by the Hausdorff one, see e.g.~\cite[example 7.8]{F03}).
\begin{proof}
By \cite[\S 8.10 (4)]{M95}, \[
\dim_{\Haus}(K\times E)\le\dim_{\Haus}(K)+\dim_{\Mink}(E)\]
and the set $K+E$ is a Lipschitz map of $K\times E$.
\end{proof}
Returning to the proof of Theorem \ref{thm:main} take\begin{equation}
1-\dim K>\delta>1-\alpha\label{eq:defdelta}\end{equation}
and find $E$ and $\mu$ from lemma \ref{Sa:lem} with $\beta_{\textrm{lemma \ref{Sa:lem}}}=1-\alpha$.
What we would like to do next is to define a distribution $T$ by
$T=S*\mu$. However, we have to consider the unlikely situation that
this convolution is $0$. Definitely, though, we may find some $m$
such that $T=S*(e^{imt}\mu)$ is not zero and we define $T$ this
way. $e^{imt}\mu$ still satisfies the crucial property of $\mu$
i.e.\ $|\widehat{e^{imt}\mu}(n)|\le C|n|^{(\alpha-1)/2}$. We get
\[
T\in\mathcal{D}^{*}(K')\mbox{ where }K':=K+E.\]
Due to  (\ref{eq:defdelta}) and lemma \ref{lem:K+E} we get: \[
\dim K'<1.\]
(when we do not state which dimension it is, we mean the Hausdorff
dimension). On the other hand,\[
\sum_{n<0}|\widehat{T}(n)|^{2}\le\sum_{n<0}|\widehat{S}(n)|^{2}\cdot C|n|^{\alpha-1}<\infty.\]
In other words we have reduced the problem to the case that $\alpha=1$.
Hence the following lemma concludes the proof:
\begin{lem}
\label{lem:Berman}Let $K$ be a compact with the Hausdorff measure
$\Lambda_{x\log1/x}(K)=0$. In particular, this is satisfied if $\dim K<1$.
Let $S\in\mathcal{D}^{*}(K)$. If $\widehat{S}(n)\in l^{2}(\mathbb{Z}^{-})$
then $S=0$.
\end{lem}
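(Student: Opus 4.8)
The plan is to move to the associated analytic function, reduce the statement to an $H^{1}$-membership, and then use the gauge hypothesis to remove the singularity that the small support could create. Write $S=S'+g$ with $g=S-S'$ the anti-analytic part; the hypothesis $\widehat{S}(n)\in l^{2}(\mathbb{Z}^{-})$ says exactly that $g\in L^{2}(\mathbb{T})$. Let $F(z)=\sum_{n\ge 0}\widehat{S}(n)z^{n}$ be the function holomorphic in $\mathbb{D}$ whose distributional boundary value is $S'$. I claim it suffices to prove $F\in H^{1}(\mathbb{D})$. Indeed, on the open full-measure set $\mathbb{T}\setminus K$ we have $S'=S-g=-g$, so the boundary value of $F$ there is the $L^{2}$ function $-g$; since $-g$ extends from the outside to the exterior Hardy function $-\widetilde{g}$, the function equal to $F$ on $\mathbb{D}$ and to $-\widetilde{g}$ outside $\overline{\mathbb{D}}$ has no jump across $\mathbb{T}\setminus K$, hence continues holomorphically across $\mathbb{T}\setminus K$; in particular $F$ has non-tangential limits equal to $-g$ a.e. If moreover $F\in H^{1}$, then $\widehat{F}(n)=\widehat{-g}(n)$ for all $n$, and since $\widehat{F}$ is supported on $\mathbb{Z}_{\ge 0}$ while $\widehat{g}$ is supported on $\mathbb{Z}^{-}$, both must vanish identically, i.e.\ $S=0$.

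The only a priori information left to exploit is that $S$, being a compactly supported distribution, has some finite order $m$; pairing $S$ against the Cauchy kernel then gives the crude bound $|F(z)|\le C\,\dist(z,K)^{-m-1}$ for $z$ near $K$. Thus the sole obstruction to $F\in H^{1}$ is a possible singularity of $F$ concentrated on $K$, of polynomial type of order $m+1$, and this is precisely what $\Lambda_{x\log 1/x}(K)=0$ has to kill. My plan for this step: produce an outer function $O$ on $\mathbb{D}$, adapted to $K$, with $|O(z)|\le C\,\dist(z,K)^{m+2}$; then $FO$ is bounded near $K$, and since $O$ annihilates $S$ (a distribution of order $m$ times a function vanishing on its support to order $>m$), the boundary distribution of $FO$ equals $-gO\in L^{1}(\mathbb{T})$ with no mass on $K$; combined with the holomorphic continuation of $F$ across $\mathbb{T}\setminus K$ this gives $FO\in H^{1}(\mathbb{D})$, whence $F=FO/O$ lies in the Smirnov class $N^{+}$, and, having $L^{1}$ boundary values, it belongs to $H^{1}$ by the Smirnov maximum principle.

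The step I expect to be the main obstacle is the construction of the multiplier $O$: one must make precise the sense in which $\Lambda_{x\log 1/x}(K)=0$ is "thin enough" to furnish, for every $m$, an outer function vanishing on $K$ at a prescribed polynomial rate — a smallness requirement on $\int_{\mathbb{T}}\log\dist(e^{i\theta},K)\,d\theta$ of exactly the kind encoded by the gauge $x\log 1/x$ — and one must verify that multiplication by such an $O$ indeed kills $S$ (through matching derivatives up to order $m$ on $K$). This matching of the distribution's order against the logarithmic gauge is the delicate point; if the naive outer-function construction fails to work under the bare hypothesis, the same conclusion should still be reachable by a direct covering argument, estimating $F$ over a contour surrounding a cover of $K$ by arcs $J_{k}$ with $\sum_{k}|J_{k}|\log(1/|J_{k}|)$ small, the $\log$ in the gauge being chosen precisely to beat a logarithmic growth of the tamed function.

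When $S$ is merely a measure the whole third step evaporates: then $m=0$, the analytic part $S'$ is an analytic measure, hence absolutely continuous by the F.\ and M.\ Riesz theorem, so $F\in H^{1}$ immediately and $S=0$. The lemma is the extension of this absolute-continuity phenomenon to distributions of higher order supported on $\Lambda_{x\log 1/x}$-null sets; alternatively one may simply invoke the relevant theorem of Berman, which packages exactly this removability statement.
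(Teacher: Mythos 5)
The reduction to ``$F\in H^{1}$'' is fine and runs parallel to the paper, but the central step of your plan --- the outer multiplier $O$ with $|O(z)|\le C\dist(z,K)^{m+2}$ --- cannot be carried out under the stated hypothesis. A nonzero function of bounded type (in particular an outer function) whose modulus is dominated by a power of $\dist(\cdot,K)$ forces $\log\dist(e^{i\theta},K)\in L^{1}(\mathbb{T})$, i.e.\ that $K$ is a Beurling--Carleson set: $\sum_{k}\ell_{k}\log(1/\ell_{k})<\infty$, where $\ell_{k}$ are the lengths of the complementary arcs. That is a condition on the \emph{gaps} of $K$, not on coverings of $K$, and it does not follow from $\Lambda_{x\log1/x}(K)=0$. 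For instance, take $K=\{0\}\cup\{a_{n}\}$ with gaps $\ell_{n}\approx1/(n\log^{2}n)$: this $K$ is countable, hence $\Lambda_{h}$-null for every gauge $h$, yet $\sum_{n}\ell_{n}\log(1/\ell_{n})=\infty$, so every Nevanlinna-class $O$ with $|O|\le C\dist(\cdot,K)^{m+2}$ is identically zero. Thus the multiplier you need simply does not exist for some compacts covered by the lemma (and even when $K$ is a Carleson set you would additionally need boundary smoothness of $O$, in the spirit of Taylor--Williams, to justify that ``$O$ annihilates $S$'').

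Your fallbacks point in the right direction but are not proofs. The ``contour around a cover with $\sum_{k}|J_{k}|\log(1/|J_{k}|)$ small'' is precisely the nontrivial content of the Dahlberg/Berman removability theorems, and the paper does proceed by quoting Berman's asymptotic Phragm\'en--Lindel\"of theorem with the gauge $\omega(x)=x\log(2/x)$ matched to the growth $\log|F(z)|\le C\log\bigl(1/(1-|z|)\bigr)$ coming from the polynomial bound on $\widehat{S}$. But invoking Berman is not a one-liner: one must verify that $F$ has finite limits along some path at every point of $\mathbb{T}\setminus K$ (the paper does this by a localization argument, pairing $S$ with the Cauchy kernel multiplied by $1-\psi$ for a smooth cutoff $\psi$ supported off $K$), and that $\liminf\log|F|$ is dominated a.e.\ by an $L^{1}$ function --- the paper shows the boundary function $f$ lies in $L^{2}$, using Abel summability of $S$ to $0$ off its support together with $Q_{z}-P_{z}=-\sum_{n\ge1}\bar{z}^{n}e^{int}$ and $\widehat{S}\in l^{2}(\mathbb{Z}^{-})$, and then takes $g=\log^{+}|f|$. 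Even then Berman's conclusion is only $\log|F|\le G$, and one still needs the Jensen-inequality step to land in $H^{2}$ before your final Fourier-support argument applies. As written, your main mechanism fails on admissible $K$ and the alternative route is only gestured at, so the proof is incomplete.
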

(the notation $\widehat{S}(n)\in l^{2}(\mathbb{Z}^{-})$ is short
for $\sum_{n<0}|\widehat{S}(n)|^{2}<\infty$). We remind the reader
that the Hausdorff measure $\Lambda_{h}$ for an increasing function
$h$ with $h(0)=0$ is the natural generalization of the usual $\alpha$-Hausdorff
measure ones gets by examining $\sum h(\diam B_{i})$ for a covering
of a set by balls $B_{i}$. See e.g.\ \cite[\S 10.2, remark 4]{K85}.

Lemma \ref{lem:Berman} is a consequence of results by Dahlberg \cite{D77}
and Berman \cite{B92}. For the convenience of the reader, let us
quote the relevant result from \cite{B92} (almost) literally and
then show how it implies lemma \ref{lem:Berman}.\renewcommand{\theenumi}{(\alph{enumi})} \renewcommand{\labelenumi}{\theenumi}

\begin{NIH}Let $\omega:\mathbb{R}^{+}\to\mathbb{R}^{+}$ satisfy
that $\omega(x)/x$ is decreasing and $\omega'(0)=\infty$. Let $F$
be an analytic function on the disk $\mathbb{D}=\{|z|<1\}$ satisfying\begin{equation}
\log|F(z)|\leq C\frac{\omega(1-|z|)}{1-|z|}.\label{eq:BermanGrowth}\end{equation}
Let $K\subset\partial\mathbb{D}$ be a set satisfying. 
\begin{enumerate}
\item $\Lambda_{\omega}(K)=0$\label{enu:BermanDim}
\item \label{enu:BermanLimit}For any $\zeta\in\partial\mathbb{D}\setminus K$
there exists a path $\rho$ in $\mathbb{D}$ ending in $\zeta$ such
that $F$ is bounded on $\rho$.
\end{enumerate}
Assume $g\in L^{1}(\mathbb{T})$ and\begin{equation}
\liminf_{r\to1}\log|F(re^{it})|\leq g(t)\quad\textrm{a.e.}\label{eq:BermanFG}\end{equation}
Then $\log|F(z)|\leq G(z)$ for all $z\in\mathbb{D}$, where $G(z)$
is the harmonic extension of $g$ to $\mathbb{D}$.\end{NIH}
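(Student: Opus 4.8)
The plan is to fix a point $z_{0}\in\mathbb{D}$ and prove $\log|F(z_{0})|\le G(z_{0})$; since $z_{0}$ is arbitrary this is the theorem. The method is a Phragm\'{e}n--Lindel\"{o}f argument on a subdomain of $\mathbb{D}$ obtained by deleting a neighbourhood of $K$, the point being that hypotheses \ref{enu:BermanDim} and (\ref{eq:BermanGrowth}) are precisely dual to one another. Fix $\epsilon>0$. Since $\Lambda_{\omega}(K)=0$ and $K$ is compact, and since $\omega$ is subadditive (because $\omega(x)/x$ is decreasing), we may cover $K$ by finitely many arcs $I_{1},\dots,I_{m}\subset\partial\mathbb{D}$, of lengths $\ell_{j}=|I_{j}|$ as small as we wish, with $\sum_{j}\omega(\ell_{j})<\epsilon$; in particular we may take all $\ell_{j}<1-|z_{0}|$. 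Over each $I_{j}$ erect the Carleson box $Q_{j}=\{z:z/|z|\in I_{j},\ 1-|z|<\ell_{j}\}$ and set $\Omega=\mathbb{D}\setminus\overline{\bigcup_{j}Q_{j}}$. Then $z_{0}\in\Omega$, the set $\Gamma:=\partial\Omega\cap\partial\mathbb{D}$ is contained in $\partial\mathbb{D}\setminus K$, and $\Sigma:=\partial\Omega\cap\mathbb{D}$ is a union of box lids and sides meeting $\partial\mathbb{D}$ only in the $2m$ endpoints of the arcs, at which $\Omega$ has interior angle $\pi/2$.

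On $\Sigma$ the growth bound (\ref{eq:BermanGrowth}) gives $\log|F(z)|\le C\omega(1-|z|)/(1-|z|)$, and on $\Gamma$ I will argue below that $\limsup_{\Omega\ni z\to\zeta}\log|F(z)|\le g(\zeta)$ for $\mu$-almost every $\zeta$, where $\mu=\mu_{\Omega,z_{0}}$ denotes harmonic measure in $\Omega$ evaluated at $z_{0}$. Granting this, the maximum principle on $\Omega$ yields
\[
\log|F(z_{0})|\ \le\ \int_{\Gamma}g\,d\mu\ +\ C\int_{\Sigma}\frac{\omega(1-|z|)}{1-|z|}\,d\mu .
\]
The second integral is $O(\epsilon)$: the lids sit at depth $\asymp\ell_{j}$ and carry harmonic measure $\lesssim\ell_{j}$, contributing $\lesssim\sum_{j}\omega(\ell_{j})$; the non-corner parts of the sides are estimated the same way using the monotonicity of $\omega(t)/t$; and near a $\pi/2$ corner $p$ a conformal map to a half-plane gives $\mu(\Sigma\cap\{|z-p|<\delta\})\asymp\delta^{2}$, so that corner contributes $\asymp\int_{0}^{\ell_{j}}\frac{\omega(t)}{t}\,t\,dt\le\omega(\ell_{j})$, and the $2m$ corners sum to $<\epsilon$. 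For the first integral one notes that, by comparison with $\mathbb{D}$, $d\mu|_{\Gamma}\le P_{z_{0}}\,d\theta$ (the $\mathbb{D}$-harmonic measure of any $A\subset\Gamma$ dominates the boundary data of the $\Omega$-harmonic measure of $A$ on all of $\partial\Omega$), so letting $\epsilon\to0$, i.e.\ $\Omega\uparrow\mathbb{D}$, the densities converge a.e.\ to $P_{z_{0}}$ and dominated convergence gives $\int_{\Gamma}g\,d\mu\to G(z_{0})$. Combining, $\log|F(z_{0})|\le G(z_{0})$. (To keep this limit legitimate when $g$ is unbounded below, one runs the argument with $g$ replaced by the truncation $g_{N}=\max(g,-N)$ as boundary data on $\Gamma$ and lets $N\to\infty$ at the very end.)

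It remains to establish the boundary inequality on $\Gamma$, and this is where (\ref{eq:BermanFG}) and \ref{enu:BermanLimit} enter. By \ref{enu:BermanLimit}, for every $\zeta\in\partial\mathbb{D}\setminus K$ the function $F$ is bounded along a path ending at $\zeta$; a sawtooth/cluster-set construction promotes this to boundedness of $F$ in a region $W_{\zeta}$ abutting $\partial\mathbb{D}$ at $\zeta$, and we choose the $I_{j}$ so that $\Omega$ approaches $\Gamma$ inside such regions. Then $F\in H^{\infty}(W_{\zeta})$ has nontangential boundary values a.e.; together with (\ref{eq:BermanFG}), which forces any such value to have modulus at most $e^{g}$, this gives $\limsup_{\Omega\ni z\to\zeta}\log|F(z)|\le g(\zeta)$ off an exceptional set $Z\subset\partial\mathbb{D}\setminus K$. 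A priori $Z$ is only Lebesgue-null, but the slow-growth hypothesis (\ref{eq:BermanGrowth}) --- via Dahlberg's theorem \cite{D77} on radial boundary values of subharmonic functions of restricted growth --- shows that $Z$ may be taken $\Lambda_{\omega}$-null; we then simply enlarge the initial cover to a cover of $K\cup Z$, still with $\sum_{j}\omega(\ell_{j})<\epsilon$, and the estimate above applies verbatim.

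I expect the real difficulty to be exactly this last step, together with the attendant care at the corners of $\Omega$: the maximum principle actually needed is not the classical one but a Phragm\'{e}n--Lindel\"{o}f principle tolerating a $\Lambda_{\omega}$-null exceptional boundary set while only the one-sided bound $C\omega(1-|z|)/(1-|z|)$ is available there, and reconciling the two quite different boundary hypotheses --- an a.e.\ radial $\liminf$ bound and arcwise boundedness --- with what such a principle requires is the substance of the matter. The conditions that $\omega(x)/x$ be decreasing and $\omega'(0)=\infty$ are precisely what make that reconciliation and the corner estimates go through; the harmonic-measure bookkeeping of the second paragraph, by contrast, is routine once the geometry is fixed.
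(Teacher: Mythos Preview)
The paper does not prove this statement at all: it is quoted verbatim from Berman \cite{B92} (``the corollary to theorem~5 on page~479'') and used as a black box in the proof of Lemma~\ref{lem:Berman}. So there is no proof in the paper to compare your attempt against.

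For what it is worth, your outline is in the spirit of the Dahlberg--Berman method: excise Carleson boxes over a fine $\Lambda_{\omega}$-cover of the exceptional set, apply a generalized maximum principle on the residual domain, and exploit the duality between the growth rate $\omega(1-|z|)/(1-|z|)$ and the gauge $\omega$ to show that the excised boundary contributes $O(\epsilon)$. The harmonic-measure bookkeeping you sketch is essentially right. You are also correct that the real content lies in the last paragraph, and you are honest about it: promoting a radial $\liminf$ bound and a mere path-boundedness hypothesis to the $\Lambda_{\omega}$-a.e.\ nontangential $\limsup$ control that a Phragm\'en--Lindel\"of argument actually needs is precisely what Berman's paper (building on Dahlberg \cite{D77}) supplies. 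The sentence ``a sawtooth/cluster-set construction promotes this to boundedness of $F$ in a region $W_{\zeta}$'' in particular hides a Lindel\"of-type theorem adapted to the growth class \eqref{eq:BermanGrowth}, which is not something one can wave through; but as a plan of attack your sketch is sound.
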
\renewcommand{\theenumi}{(\roman{enumi})} \renewcommand{\labelenumi}{\theenumi}

See \cite{B92}, the corollary to theorem 5 on page 479. Actually,
the theorem there is more general: all conditions are necessary only
on an arc $\gamma$ (except for a very mild global growth condition)
and the result is that $\log|F|\le G$ in a neighborhood of $\gamma$.
\begin{proof}
[Proof of lemma \ref{lem:Berman}]Examine the analytic function \[
F(z)=\sum_{n=0}^{\infty}\widehat{S}(n)z^{n}.\]
We need two properties of $F$:
\begin{enumerate}
\item \label{enu:Fcont}For every $u\not\in K$, the limit $\lim_{z\to e^{iu}}F(z)$
exists and is finite. 
\item \label{enu:fL2}The (a.e.~defined) limit function $f(u)=\lim_{z\to e^{iu}}F(z)$
is in $L^{2}(\mathbb{T})$.
\end{enumerate}
We remark on \ref{enu:Fcont} that we need much less --- we only need
that the limit exists along some path $\rho$, e.g.\ radially. But
as we will see, the function $F$ is continuous in a neighborhood
of $u$. 

To show property \ref{enu:Fcont} write, for $z\in\mathbb{D}$ \[
F(z)=\sum_{n=0}^{\infty}z^{n}\int S(t)e^{-int}dt=\int S(t)Q_{z}(t)\, dt\]
where $Q_{z}=\sum_{0}^{\infty}z^{n}e^{-int}=\frac{1}{1-ze^{-it}}$
(this is the kernel of the Cauchy transform, which is also the sum
of the Poisson kernel and the conjugate Poisson kernel, but this is
not important at this point). Note that because $S$ is a distribution,
the notation $S(t)$ is slightly misleading --- the integral $\int S(t)Q_{z}(t)\, dt$
though is well defined. Let $\psi$ be a $C^{\infty}$ function supported
on $[u-\epsilon,u+\epsilon]$ with $\psi\equiv1$ on $[u-\frac{1}{2}\epsilon,u+\frac{1}{2}\epsilon]$,
where $\epsilon=\frac{1}{2}\dist(u,K)$. Then\[
\int S(t)Q_{z}(t)\psi(t)\, dt=0\]
because $\psi$ is $0$ on a neighborhood of $K$ and $S$ is supported
on $K$. On the other hand, \[
\lim_{z\to e^{iu}}\int S(t)Q_{z}(t)(1-\psi(t))\, dt=\int S(t)\left(Q_{e^{iu}}(t)(1-\psi(t))\right)\, dt\]
since after multiplication by $1-\psi(t)$ we get that $Q_{z}(1-\psi(t))$
is smooth and its derivatives depend continuously on $z$ all the
way up to $e^{iu}$. This shows that the limit function $f$ exists
and is finite outside $K$ i.e.\ proves property \ref{enu:Fcont}.

For property \ref{enu:fL2} examine the usual Poisson kernel \[
P_{z}(t)=\sum_{n=0}^{\infty}z^{n}e^{-int}+\sum_{n=-\infty}^{-1}\bar{z}^{n}e^{int}=\frac{1-|z|^{2}}{1-2|z|\cos(t-\arg z)+|z|^{2}}.\]
Any distribution is Abel summable to $0$ outside its support \cite[Proposition 2, appendix I, p. 162]{KS94}
which means that for any $u\not\in K$,\[
\lim_{r\to1}\int S(t)P_{re^{iu}}(t)=0.\]
and hence \[
f(u)=\lim_{r\to1}\int S(t)\left(Q_{re^{iu}}(t)-P_{re^{iu}}(t)\right)\, dt.\]
However, $Q-P=-\sum_{1}^{\infty}\bar{z}^{n}e^{int}$ so we get\[
f(u)=\lim_{r\to1}\sum_{n=1}^{\infty}-\widehat{S}(-n)\left(re^{-iu}\right)^{n}\]
But $\sum_{1}^{\infty}|\widehat{S}(-n)|^{2}<\infty$! We get that
$f\in L^{2}$, showing property \ref{enu:fL2}.

We now apply Berman's theorem. Let $\omega(x)=x\log(2/x)$. We first
note that $F$ satisfies (\ref{eq:BermanGrowth}) because\[
\log|F(z)|=\log\left(\sum_{n=0}^{\infty}\widehat{S}(n)z^{n}\right)\le\log\left(\sum_{n=0}^{\infty}(C+n^{C})|z|^{n}\right)\le C\log\left(\frac{1}{1-|z|}\right)\]
(the middle inequality holds because any Schwarz distribution $S$
has that $\widehat{S}$ grows no faster than polynomially). Next,
$K$ satisfies \ref{enu:BermanDim} by assumption and \ref{enu:BermanLimit}
from property \ref{enu:Fcont} which shows that we may take $\rho$
to be simply the radius. Define now \[
g(t)=\max\{0,\log|f(t)|\}.\]
$g$ is in $L^{1}$ because $f\in L^{2}$ --- we cannot control $(\log f)^{-}$
but $(\log f)^{+}$ is definitely well-behaved. We get (\ref{eq:BermanFG})
again from the continuity of $F$. Hence Berman's theorem holds and
we get $\log|F|\leq G$, where $G$ is as in Berman's theorem namely
the harmonic extension of $g$ to $\mathbb{D}$. To estimate $G$
we note that by definition $G(z)=\int P_{z}(t)g(t)\, dt$, and hence
by Jensen's inequality,\begin{equation}
e^{2G(z)}=\exp\Big(\int_{\mathbb{T}}P_{z}(t)2g(t)\, dt\Big)\leq\int_{\mathbb{T}}P_{z}(t)e^{2g(t)}\, dt.\label{eq:Jensen}\end{equation}
This observation gives that $F$ is in the Hardy space $H^{2}$ because
\begin{align*}
\int_{|z|=r}|F(z)|^{2}\, d|z| & \le\int_{|z|=r}e^{2G(z)}\, d|z|\stackrel{(\ref{eq:Jensen})}{\le}\int_{|z|=r}\int_{\mathbb{T}}P_{z}(t)e^{2g(t)}\, dt\, d|z|=\\
 & =\int_{\mathbb{T}}e^{2g(t)}\, dt\le2\pi+\int_{\mathbb{T}}|f(t)|^{2}\, dt<\infty.\end{align*}
This however is impossible: if $F\in H^{2}$ then by the definition
of $F$ we get that $\sum_{n=0}^{\infty}|\widehat{S}(n)|^{2}<\infty$
and since we already know that for the negative Fourier coefficients
we get that $S$ is an $L^{2}$ function. But it is supported on $K$
which has zero measure. Hence it is identically $0$.
\end{proof}

\subsection*{Question}

Frostman's theorem for the case $\alpha=0$ states that a compact
$K$ supports a distribution $S$ with \[
\sum_{n}\frac{|\widehat{S}(n)|^{2}}{|n|+1}<\infty\]
if and only if it has positive logarithmic capacity \cite[chapter III, theorem V]{KS94}.
Does this theorem have a one-sided analog? We recall again the theorem
of Hru\v s\-\v cev and Peller which states that for any measure $\mu$,
the one-sided estimate $\sum_{n>0}|\widehat{\mu}(n)|^{2}/n<\infty$
implies the two-sided estimate above. So the question is really only
about distributions.

\section*{II}

There is a delicate difference between the two-sided and one-sided
results. The condition (\ref{eq:twosided}) in fact implies that the
compact $K$ has positive $\alpha$-measure, see \cite[\S10.3, theorem 2, p. 132]{K85}.
The condition (\ref{eq:one-sided}) does not (at least for $\alpha=1$).
Indeed, the following, quite surprising, result was proved in \cite{KO.03}
\begin{thm}
\label{thm:KO03}There exist a singular (non-trivial) pseudo-function
$S$ such that $\widehat{S}\in l^{2}(\mathbb{Z}^{-})$.
\end{thm}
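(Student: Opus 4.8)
\textbf{Proof proposal for Theorem \ref{thm:KO03}.}

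The plan is to build the pseudo-function $S$ as an infinite Riesz-product-type convolution, realizing it as a weak-$*$ limit of absolutely continuous measures whose densities are trigonometric polynomials with carefully controlled spectra. The target is a singular measure (hence not $L^2$, so $\widehat S\not\in l^2(\mathbb Z)$ in total), with $\widehat S(n)\to 0$ (pseudo-function), yet with the anti-analytic part square-summable. First I would fix a rapidly growing lacunary-type sequence $n_1\ll n_2\ll\cdots$ and consider partial products $P_N=\prod_{k=1}^N(1+p_k)$, where each $p_k$ is a trigonometric polynomial whose spectrum sits in a narrow band around $\pm n_k$, chosen so that $P_N\ge 0$, $\int P_N=1$, and the supports of the $P_N$ shrink (so the limit is singular). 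The growth condition on $n_k$ must guarantee that multiplying out the product produces essentially "non-overlapping" frequency blocks, so that one can read off $\widehat S$ block by block.

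The key mechanism for the one-sided decay is to make the building blocks $p_k$ asymmetric: the positive-frequency part of $p_k$ carries the bulk of the mass (which is what makes the product singular, via the usual Riesz-product dissociativity/orthogonality heuristics applied on the analytic side), while the negative-frequency part of $p_k$ is given a small amplitude $\varepsilon_k$. When one expands $\prod(1+p_k)$, a Fourier coefficient $\widehat S(n)$ with $n<0$ arises from products of blocks in which the net frequency is negative; each such term pays at least one factor of some $\varepsilon_k$. By choosing $\sum_k \varepsilon_k^2 \cdot (\text{block size}_k)$ summable, and arranging the frequency arithmetic so that each negative frequency $n$ is hit by a bounded number of product terms, one gets $\sum_{n<0}|\widehat S(n)|^2<\infty$. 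Simultaneously one must keep $\widehat S(n)\to 0$ as $n\to+\infty$ as well (pseudo-function), which follows because along the analytic side the lacunary structure forces the coefficients in the $k$-th block to have size controlled by the tail of the product, tending to $0$; here the classical fact that a singular measure can still be a pseudo-function (Menshov-type constructions) is the template to imitate.

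The main obstacle I anticipate is reconciling three competing demands on the single sequence of parameters $(n_k,\varepsilon_k,\deg p_k,\|p_k\|)$: (i) singularity of the limit, which wants the blocks "independent" and the mass concentrated so the supports collapse; (ii) the pseudo-function property $\widehat S(n)\to0$, which wants the individual coefficients to decay, i.e.\ no single block to be too heavy relative to the tail; and (iii) $\widehat S|_{\mathbb Z^-}\in l^2$, which forces the negative-side amplitudes $\varepsilon_k$ to be summable-in-square after accounting for the combinatorial multiplicity of how negative frequencies are generated. The delicate point is the multiplicity estimate: I would need the sums $\pm n_{k_1}\pm\cdots\pm n_{k_j}$ that land in the negative range to be distinct (or nearly so) across different index tuples, which is a Sidon/dissociativity condition on $\{n_k\}$ and dictates super-exponential growth. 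Once the arithmetic of the frequency set is pinned down, verifying singularity should follow from a standard argument (the densities $P_N$ concentrate on a Cantor-type set of measure zero, or one invokes the Jessen--Wintner purity / Kolmogorov 0-1 type dichotomy for Riesz products and rules out the absolutely continuous alternative using that $S\notin L^2$ on the analytic side because the mass there is not square-summable), and the remaining estimates are routine block-by-block bookkeeping. I would present the construction with explicit choices such as $n_{k+1}=n_k^{2}$ (or faster) and $\varepsilon_k = 2^{-k}$, then check the three properties in turn.
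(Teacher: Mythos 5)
Your plan cannot work, for a structural reason: the object you are aiming at is a \emph{measure}, and no measure can satisfy the conclusion of Theorem \ref{thm:KO03}. First, your partial products are required to satisfy $P_N\ge 0$, $\int P_N=1$; a nonnegative (hence real) density has Hermitian-symmetric Fourier coefficients, $\widehat{P_N}(-n)=\overline{\widehat{P_N}(n)}$, so there is no way to make the negative-frequency side of the product small while the positive side stays large --- the ``asymmetric $p_k$'' idea is incompatible with positivity, and any weak-$*$ limit would have $|\widehat S(-n)|=|\widehat S(n)|$, forcing two-sided $l^2$, i.e.\ $S\in L^2$, which contradicts singularity. Second, even if you drop positivity and allow complex Riesz-type products with uniformly bounded variation norms, the limit is still a finite complex measure, and then the F.~and~M.~Riesz theorem kills the construction: if $\widehat S\in l^2(\mathbb{Z}^-)$, subtract the $L^2$ function $g(t)=\sum_{n<0}\widehat S(n)e^{int}$; the measure $S-g\,dt$ has vanishing negative coefficients, hence is absolutely continuous, hence so is $S$. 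So a singular object with $\widehat S\in l^2(\mathbb{Z}^-)$ is necessarily a genuine distribution that is not a measure (this is exactly why the theorem was surprising, clashing with Privalov-type uniqueness), and no amount of tuning $(n_k,\varepsilon_k)$ in a multiplicative measure construction can circumvent this; if instead you let the variation norms blow up, you lose every tool you invoke (weak-$*$ compactness in measures, concentration of densities, Jessen--Wintner purity), and nothing in the proposal controls the limit.

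For comparison, the actual construction (in \cite{KO.03}, and in the variant proving Theorem \ref{thm:thickness} here) is additive rather than multiplicative and exploits unboundedness in an essential way: take a singular probability measure $\mu$ on a Cantor-type compact $K$, let $G$ be its Poisson extension and $F=\exp\big(\delta(G+i\widetilde G)\big)$, an analytic function which is unbounded near $K$ but has bounded boundary values $f$ (indeed $|f|=1$ off $K$). Define $S$ by $\widehat S(m)=\widehat F(m)-\widehat f(m)$. Then the anti-analytic part of $S$ is that of $-f\in L^\infty\subset L^2$, giving $\widehat S\in l^2(\mathbb{Z}^-)$; the uniform convergence of $F$ to $f$ off $K$ shows $\operatorname{supp}S\subset K$, a set of measure zero, and $S\ne 0$ because $F\notin H^2$; the delicate remaining point is precisely the pseudo-function property $\widehat F(m)\to 0$, which in this paper is obtained by randomly perturbing the construction of $K$ and running a fourth-moment estimate --- it does not come for free from lacunarity, as your sketch assumes.
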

(recall that a pseudo-function is a distribution with $\lim_{|n|\to\infty}\widehat{S}(n)=0$).
This is surprising because it follows that the $L^{2}$ function $f(t)=\sum_{n<0}\widehat{S}(n)e^{int}$
has an a.e.\ converging representation by an analytic sum which is
different from its Fourier expansion. However, such representations
are unique (by Privalov's theorem). Thus theorem \ref{thm:KO03} contradicted
a long-established view that \emph{if a method of representation by
a trigonometric expansion is unique, it must be the Fourier expansion}.
We named the space of functions that have an analytic representation
of this sort PLA. See \cite{KO.03} for more details.

Theorem \ref{thm:KO03} can be strengthened to show that the anti-analytic
amplitudes of S may vanish faster then any power of $1/|n|$, see
\cite{KO.04}, where the precise threshold for the decay is found.
The class PLA was further analyzed in \cite{KO.07}.

Our goal now is to find the threshold for the size of support in Theorem
\ref{thm:KO03} in terms of Hausdorff $\Lambda$-measure. By lemma
\ref{lem:Berman} we see that if $\Lambda_{x\log1/x}(K)=0$, no such
example may exist, and the condition that the positive Fourier coefficients
tend to $0$ is not needed for this (the assumption that $S$ is a
Schwartz distribution does imply that the positive coefficients do
not grow faster than polynomial). We now show that this condition
is precise.
\begin{thm}
\label{thm:thickness}There exist a non-trivial pseudo-function $S$
with $\widehat{S}(n)\in l^{2}(\mathbb{Z}^{-})$\noun{ }supported on
a compact of finite $t\log1/t$-Hausdorff measure.
\end{thm}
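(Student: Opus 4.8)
The plan is to revisit the construction from \cite{KO.03} that produced Theorem \ref{thm:KO03} and to keep careful track of the support, showing that one can arrange it to sit inside a compact of finite $\Lambda_{t\log1/t}$-measure. The construction there is of Riesz-product type: one builds the pseudo-function $S$ as a weak-$*$ limit of absolutely continuous measures $S_N = \prod_{k=1}^{N}\bigl(1+\text{(a lacunary analytic block)}\bigr)$, where the $k$-th block is a trigonometric polynomial with frequencies in a rapidly growing window $[\lambda_k,\Lambda_k]$ and with small $l^2$-norm on the negative side. The positive Fourier coefficients are allowed to be large (only $o(1)$ is required for the pseudo-function property), while the negative coefficients are squeezed into $l^2(\mathbb{Z}^-)$ by choosing the blocks to be "mostly analytic" and summable in the appropriate sense. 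First I would recall this construction verbatim, isolating the two free parameters at stage $k$: the lacunarity ratio $\Lambda_k/\lambda_k$ and the number of active frequencies $m_k$ in the $k$-th block.

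Next I would identify the support. A Riesz-type product of this kind concentrates, in the limit, on a Cantor set $K$ whose $k$-th generation consists of roughly $\prod_{j\le k} m_j$ intervals each of length comparable to $\Lambda_k^{-1}$ (the reciprocal of the highest active frequency used so far). The Hausdorff gauge $h(t)=t\log1/t$ of this set is controlled by $\sum_k (\prod_{j\le k} m_j)\cdot h(\Lambda_k^{-1})$, which is finite as soon as the $\Lambda_k$ grow fast enough relative to the partial products of the $m_k$ — concretely, if $\Lambda_k \ge \exp\bigl(\prod_{j\le k} m_j\bigr)$ then $h(\Lambda_k^{-1}) \le \Lambda_k^{-1}\log\Lambda_k$ is bounded by something summable against the generation count, with room to spare. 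So the strategy is: run the \cite{KO.03} construction, but at each stage choose $\Lambda_k$ (equivalently the lacunarity gaps) large enough to both preserve the original analytic/anti-analytic estimates \emph{and} force the Cantor limit set to have finite $t\log1/t$-measure. Since the original construction already allows the gaps to be arbitrarily large (lacunarity only helps the Fourier estimates), this added demand costs nothing.

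The main obstacle I anticipate is a tension, not between finiteness of $\Lambda_{h}(K)$ and the $l^2(\mathbb{Z}^-)$ bound — those pull in the same direction — but between the \emph{non-triviality} of the weak-$*$ limit $S$ and the requirement that $K$ be thin enough. One must be sure that while thinning $K$ (by increasing the gaps $\Lambda_k/\lambda_k$) the total mass of $S$ does not leak away and the limit remains a genuine pseudo-function supported exactly on $K$ rather than collapsing. This is handled by keeping $\prod_k(1+\varepsilon_k)$ with $\sum\varepsilon_k<\infty$ so the masses are bounded below, and by the standard fact that weak-$*$ limits of such products are supported on the intersection of the generations; this part is routine once the parameters are fixed. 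A secondary technical point is the boundary between "finite measure" (what we claim) and "zero measure" (which Lemma \ref{lem:Berman} forbids): we must check that $\Lambda_{t\log1/t}(K)>0$ as well, which follows because the measure $S$ (or rather its total-variation majorant among the approximants) furnishes, via a Frostman-type mass distribution argument, a positive lower bound for the $h$-measure of its support. I would close by noting that the positive Fourier coefficients are, as in \cite{KO.03}, merely $o(1)$ and not summable, so $S\notin L^2$ and the example is genuinely a singular pseudo-function rather than a function.
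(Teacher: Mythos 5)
There is a genuine gap, and it sits exactly where you claim there is no tension. You propose to thin the limiting Cantor set by taking the $\Lambda_k$ so large that $\sum_k\bigl(\prod_{j\le k}m_j\bigr)\,h(\Lambda_k^{-1})<\infty$ for $h(t)=t\log(1/t)$, asserting that finiteness of $\Lambda_h(K)$ and the condition $\widehat S\in l^2(\mathbb{Z}^-)$ ``pull in the same direction'' so that the extra demand ``costs nothing''. But if the generation-$k$ data $N_k=\prod_{j\le k}m_j$ and $\ell_k\approx\Lambda_k^{-1}$ satisfy $N_k\,h(\ell_k)\to0$ --- which your summability requirement forces --- then $\Lambda_{t\log 1/t}(K)=0$, and Lemma \ref{lem:Berman} of this very paper then says that \emph{every} distribution supported on $K$ with square-summable negative coefficients is identically zero. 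So your parameter choice destroys non-triviality outright, and your later appeal to a Frostman-type mass-distribution argument to get $\Lambda_h(K)>0$ contradicts your own choice of gaps. The two requirements pull in opposite directions: the support must be kept at exactly the critical thickness, as in the paper, where generation $n$ consists of $2^n$ intervals of length $\sigma_n=2\pi/(n2^n)$, so that $2^n h(\sigma_n)$ stays bounded away from $0$ and $\infty$.

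A second, related gap is that the genuinely hard part of the theorem --- showing that the \emph{positive} coefficients $\widehat S(m)$ tend to zero --- is not addressed beyond the remark that only $o(1)$ is needed. Once the support is pinned at the critical size you can no longer enlarge the gaps to improve the Fourier estimates, and this is where the paper has to work: the construction (as in \cite{KO.03}, which is of the form $S=F-f$ with $F=\exp(\delta(G+i\widetilde G))$ for $G$ the harmonic extension of the natural measure on the Cantor set, not a Riesz product) yields easily that $\widehat S\in l^2(\mathbb{Z}^-)$, that $S$ is supported on $K$, and that $\widehat F(m)$ grows at most polynomially --- that is the Corollary --- but the pseudo-function property is obtained only after randomizing the positions of the intervals and running a fourth-moment estimate (Lemmas \ref{lem:real} and \ref{lem:complex}). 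Your sketch would need an argument of comparable substance at this point, and none is indicated.
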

The idea of the proof is as follows. Take the natural measure $\mu$
on some symmetric compact of the exact size. Extend it to a harmonic
function on the disk, and let $F=e^{\mu+i\widetilde{\mu}}$ be an
analytic function inside the disk. Let $f$ be its almost-everywhere
boundary limit. $S=F-f$ is the required object, except it might not
be a pseudo-function. To solve this problem we introduce a random
perturbation in the compact (and hence in $\mu$, $F$, $f$ and $S$).
If one does not require from $S$ to be a pseudo-function then the
probabilistic part of the proof is not needed. We shall therefore
note in the proof what is needed to get the simpler result, which
we shall refer to as {}``the corollary'':
\begin{cor*}
There exist a non-trivial Schwartz distribution $S$ with $\widehat{S}(n)\in l^{2}(\mathbb{Z}^{-})$\noun{
}supported on a compact of finite $t\log1/t$-Hausdorff measure.
\end{cor*}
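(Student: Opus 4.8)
The plan is to produce $S$ as the discrepancy between the distributional and the almost-everywhere boundary values of a single analytic function built from a carefully chosen Cantor set, exactly along the lines sketched just before Theorem~\ref{thm:thickness}; for the corollary the probabilistic perturbation will not be needed.

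\textbf{Step 1 (the set and its measure).} First I would fix the gauge $h(t)=t\log(1/t)$ and build a symmetric Cantor set $K\subset\mathbb{T}$ whose generation $n$ consists of $2^{n}$ intervals of length $\ell_{n}\asymp 2^{-n}/n$; since $\ell_{n+1}/\ell_{n}\to\tfrac12$ this is a legitimate construction. Let $\mu$ be the natural probability measure on $K$, carrying mass $2^{-n}$ on each generation-$n$ interval. A routine count of how many generation-$m$ intervals a given interval $I$ can meet (with $m$ chosen so that $\ell_{m}\asymp|I|$, using that the generation-$m$ intervals are $\asymp\ell_{m}$-separated) gives the Frostman-type estimate $\mu(I)\le C\,|I|\log(1/|I|)=C\,h(|I|)$ for every interval $I$. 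Covering $K$ by its generation-$n$ intervals shows $\Lambda_{h}(K)\le\limsup_{n}2^{n}h(\ell_{n})=\log 2<\infty$, while the mass distribution principle applied to $\mu$ gives $\Lambda_{h}(K)\ge 1/C>0$. In particular $|K|=0$, so $\mu$ is singular.

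\textbf{Step 2 (the analytic function).} Next I would set $g(z)=\int_{\mathbb{T}}\frac{e^{it}+z}{e^{it}-z}\,d\mu(t)$, analytic in $\mathbb{D}$ with $\operatorname{Re}g\ge0$ the Poisson integral of $\mu$, and $F=e^{g}$, so that $1/F=e^{-g}=S_{\mu}$ is the classical singular inner function with singular measure $\mu$. Plugging the Frostman estimate of Step~1 into the standard dyadic bound for a Poisson integral (split the kernel over the ball of radius $1-|z|$ and dyadic annuli and sum geometrically) yields $\operatorname{Re}g(z)\le C\log\frac{1}{1-|z|}$, hence $|F(z)|\le(1-|z|)^{-C}$. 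This polynomial growth makes the Taylor coefficients $a_{n}$ of $F$ grow at most polynomially, so $F$ has a distributional boundary value $F^{\ast}=\sum_{n\ge0}a_{n}e^{int}\in\mathcal{D}^{\ast}$. On the other hand $F=1/S_{\mu}$ has finite radial limits $f$ a.e.\ on $\mathbb{T}$ with $|f|=1$ (because $|S_{\mu}|=1$ a.e., $\mu$ being singular); since then $f=\overline{S_{\mu}}$ pointwise a.e., $\widehat{f}(n)=\overline{\widehat{S_{\mu}}(-n)}$, which vanishes for $n>0$ and satisfies $\sum_{n<0}|\widehat{f}(n)|^{2}=\sum_{k\ge1}|\widehat{S_{\mu}}(k)|^{2}\le\|S_{\mu}\|_{L^{2}(\mathbb{T})}^{2}=1$.

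\textbf{Step 3 (the distribution).} Put $S=F^{\ast}-f\in\mathcal{D}^{\ast}$. Away from $K$ the measure $\mu$ vanishes, so $g$ — and hence $F$ — extends analytically across every arc of $\mathbb{T}\setminus K$, where therefore both $F^{\ast}$ and the radial limit $f$ coincide with the genuine continuous boundary values of $F$; thus $\operatorname{supp}S\subseteq K$. For $n<0$ we have $\widehat{F^{\ast}}(n)=0$, so $\widehat{S}(n)=-\overline{\widehat{S_{\mu}}(-n)}$ and $\sum_{n<0}|\widehat{S}(n)|^{2}\le1$, i.e.\ $\widehat{S}\in l^{2}(\mathbb{Z}^{-})$. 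Finally $S\ne0$: if $S=0$ then $\widehat{S}(n)=0$ for all $n$, in particular $a_{n}=\widehat{S}(n)=0$ for $n\ge1$, so $F$ is constant, so $S_{\mu}=1/F$ is constant, forcing $\mu=0$ — contradicting $\mu(\mathbb{T})=1$. This $S$ has all the required properties, and the corollary follows.

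\textbf{What is the main obstacle.} For the corollary, the only substantive work is the construction in Step~1 — tuning $\ell_{n}\asymp2^{-n}/n$ so that the natural measure is simultaneously $h$-Frostman while $K$ keeps finite $h$-measure — together with the Poisson estimate in Step~2; everything else is soft Hardy-space bookkeeping. The real difficulty lies one level up, in Theorem~\ref{thm:thickness}: the analytic amplitudes $a_{n}=\widehat{S}(n)$, $n\ge0$, are the Taylor coefficients of $1/S_{\mu}$, a function that is not even of Smirnov class, so a priori they need not tend to $0$ and $S$ need not be a pseudo-function. Forcing $\widehat{S}(n)\to0$ as $n\to+\infty$ requires randomizing the positions of the Cantor intervals and extracting the extra cancellation almost surely; controlling these analytic amplitudes is the crux of the full theorem.
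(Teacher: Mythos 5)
Your argument is correct, and globally it is the paper's own construction: a Cantor-type compact with $2^{n}$ intervals of length $\asymp2^{-n}/n$ (so $|K|=0$ and $\Lambda_{t\log1/t}(K)<\infty$), its natural measure $\mu$, the analytic function $F=e^{G+i\widetilde{G}}$, and $S$ defined as the difference between the distributional boundary value of $F$ and its a.e.\ boundary limit $f$. Where you genuinely diverge is in the one quantitative step of the corollary, namely that $\widehat{F}(m)=O(m^{C})$, i.e.\ that $S$ is a Schwartz distribution. The paper gets this through the approximation scheme $g_{n}=\frac{1}{|K_{n}|}\mathbf{1}_{K_{n}}$, lemma \ref{lem:GnG}, the bound $\|F_{n}\|_{\infty}\le e^{\delta n}$ with $n\asymp\log m$, and a Cauchy integral over $|z|=1-1/m$ --- machinery it builds anyway because the same $F_{n}$ are needed for the probabilistic part of theorem \ref{thm:thickness}. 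You instead prove the Frostman-type bound $\mu(I)\le C|I|\log(1/|I|)$ and feed it into the standard dyadic estimate for the Poisson integral, obtaining $\operatorname{Re}g(z)\le C\log\frac{1}{1-|z|}$, hence $|F(z)|\le(1-|z|)^{-C}$ and polynomially bounded Taylor coefficients directly; you also dispense with the parameter $\delta$, which indeed only matters for the pseudo-function statement. The inner-function framing $F=1/S_{\mu}$ makes $|f|=1$ a.e., $\widehat{f}(n)=0$ for $n>0$, the $l^{2}(\mathbb{Z}^{-})$ bound, and the non-triviality of $S$ all transparent. Two minor points: in your construction the generation-$m$ intervals are separated only by gaps of order $\ell_{m}/m$, not $\asymp\ell_{m}$ (the slack inside a parent is $\ell_{m-1}-2\ell_{m}\asymp\ell_{m}/m$); the Frostman bound survives because an interval $I$ with $\ell_{m}\le|I|<\ell_{m-1}$ meets at most two of the pairwise disjoint generation-$(m-1)$ intervals, hence at most four generation-$m$ intervals, giving $\mu(I)\le4\cdot2^{-m}\le Ch(|I|)$. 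Also the lower bound $\Lambda_{h}(K)>0$ is not needed for the corollary. Your closing assessment is accurate: the real content of theorem \ref{thm:thickness} is forcing $\widehat{S}(m)\to0$ as $m\to+\infty$, which is exactly what the randomization of the interval positions achieves in the paper.
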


\subsection*{Question}

What about the analog for $\alpha<1$? Namely, is it true that for
any $\alpha<1$ there exists a pseudo-function $S$ supported on a
set of zero $\Lambda_{x^{\alpha}}$-Hausdorff measure with \[
\sum_{n<0}\frac{\left|\widehat{S}(n)\right|^{2}}{|n|^{1-\alpha}}<\infty?\]

\noindent \emph{Proof} \emph{of theorem \ref{thm:thickness} and of
the corollary}. As mentioned above, our construction is similar to
that of \cite{KO.03} in that we take a singular measure $\mu$ and
define $F=e^{\mu+i\widetilde{\mu}}$. For the theorem we will take
$\mu$ to be random, an idea used also in \cite{KO.04}, to get improved
smoothness of the Fourier transform, but for the corollary that will
not be needed. Here are the details. Define \begin{equation}
\sigma_{n}=\frac{2\pi}{n2^{n}},\quad\tau_{n}=\frac{1}{6}(\sigma_{n-1}-2\sigma_{n})\quad n\geq0.\label{eq:deftaun}\end{equation}
Hence, \begin{equation}
\frac{\tau_{n}}{\sigma_{n}}=\frac{1}{3(n-1)}.\label{eq:tausigmanu3}\end{equation}
Let $s(n,k)$ be a collection of numbers between $0$ and $1$, for
each $n\in\mathbb{N}$ and each $0\leq k<2^{n}$. Most of the proof
will hold for any choice of $s(n,k)$, but in the last part we shall
make them random, and prove that the constructed function will have
the required properties for almost any choice of $s(n,k)$. For the
corollary one may take them to be $0$. Define now inductively intervals
$I(n,k)=[a(n,k),a(n,k)+\sigma_{n}]$ (we call these $I(n,k)$ {}``intervals
of rank $n$'') using the following: $I(0,0)=[0,2\pi]$ and for $n\geq0$,
$0\leq k<2^{n}$ \begin{align}
a(n+1,2k) & =a(n,k)+\tau_{n+1}(1+s(n+1,2k))\nonumber \\
a(n+1,2k+1) & =a(n,k)+{\textstyle \frac{1}{2}}\sigma_{n}+\tau_{n+1}(1+s(n+1,2k+1))\label{eq:defank}\end{align}
In other words, at the $n^{\textrm{th}}$ step, inside each interval
of rank $n$ (which has length $\sigma_{n}$), situate two disjoint
intervals of rank $n+1$ of lengths $\sigma_{n+1}$ in random places
(but not too near the boundary of $I(n,k)$ or its middle). Define
\begin{align*}
K & :=\bigcap_{n=1}^{\infty}K_{n}, & K_{n} & :=\bigcup_{k=0}^{2^{n}-1}I(n,k).\\
K^{\circ} & :=e^{iK} & K_{n}^{\circ} & :=e^{iK_{n}}.\end{align*}
Note that $|K_{n}|=\frac{2\pi}{n}$ and hence $K$ has zero measure
and finite $\Lambda_{t\log1/t}$-measure. Define a measure $\mu$
by the weak limit of the measures with density $\frac{1}{|K_{n}|}\mathbf{1}_{K_{n}}$,\[
\mu=\lim_{n\to\infty}\frac{1}{|K_{n}|}\mathbf{1}_{K_{n}}.\]
It is easy to see that the limit exists and is supported on $K$.
However, we will need later on a more quantitative version of this
convergence. Define therefore \[
g_{n}=\frac{1}{|K_{n}|}\mathbf{1}_{K_{n}}\]
and let $G_{n}$ be the harmonic extension of $g_{n}$ to the disk
$\overline{\mathbb{D}}$.
\begin{lem}
\label{lem:GnG}For any $z\in\overline{\mathbb{D}}\setminus K_{n}^{\circ}$\begin{equation}
|G_{n+1}(z)-G_{n}(z)|\leq\frac{C}{2^{n}d(z,K_{n}^{\circ})}.\label{eq:fnDfn1DdzK}\end{equation}
Further, this holds also for the conjugate harmonic functions $\widetilde{G_{n}}$,
\begin{equation}
|\widetilde{G_{n+1}}(z)-\widetilde{G_{n}}(z)|\leq\frac{C}{2^{n}d(z,K_{n}^{\circ})}.\label{eq:zntilde}\end{equation}
\end{lem}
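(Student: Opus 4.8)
The plan is to express $G_{n+1}-G_n$ and $\widetilde{G_{n+1}}-\widetilde{G_n}$ as the Poisson and conjugate Poisson integral of $h_n:=g_{n+1}-g_n$, and to extract the decisive factor $2^{-n}$ from a cancellation property of $h_n$. Since $K_{n+1}\subset K_n$, the function $h_n$ vanishes off $K_n$, and on a rank-$n$ interval $I(n,k)$ it equals $\frac{n+1}{2\pi}\mathbf{1}_{I(n+1,2k)\cup I(n+1,2k+1)}-\frac{n}{2\pi}\mathbf{1}_{I(n,k)}$. The normalization in \eqref{eq:deftaun} (which gives $|K_n|=2\pi/n$, equivalently $n\sigma_n=2(n+1)\sigma_{n+1}$) is exactly what forces
\[
\int_{I(n,k)}h_n=0,\qquad \int_{I(n,k)}|h_n|\le\frac{C}{2^{n}}\qquad(0\le k<2^{n}).
\]
For $z\in\overline{\mathbb D}\setminus K_n^{\circ}$ put $d:=\dist(z,K_n^{\circ})>0$; then the Poisson kernel $P_z(t)$ and the conjugate Poisson kernel $Q_z(t)$ are smooth in $t$ throughout $K_n$, and
\[
G_{n+1}(z)-G_n(z)=\frac1{2\pi}\int_{\mathbb T}P_z(t)\,h_n(t)\,dt,\qquad \widetilde{G_{n+1}}(z)-\widetilde{G_n}(z)=\frac1{2\pi}\int_{\mathbb T}Q_z(t)\,h_n(t)\,dt.
\]

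The main step is then the following. Using $\int_{I(n,k)}h_n=0$, I would subtract from $P_z$ the constant $P_z(t_k)$ (some $t_k\in I(n,k)$) on each $I(n,k)$, bound the oscillation of $P_z$ over $I(n,k)$ by $\int_{I(n,k)}|\partial_tP_z|$, and sum over $k$:
\[
|G_{n+1}(z)-G_n(z)|\le\frac{C}{2^{n}}\sum_{k=0}^{2^{n}-1}\int_{I(n,k)}|\partial_tP_z(t)|\,dt=\frac{C}{2^{n}}\int_{K_n}|\partial_tP_z(t)|\,dt,
\]
and verbatim the same with $Q_z$ in place of $P_z$. From $P_z(t)+iQ_z(t)=\frac{e^{it}+z}{e^{it}-z}$ one computes $\partial_t(P_z+iQ_z)(t)=\frac{-2ize^{it}}{(e^{it}-z)^{2}}$, so the \emph{same} bound $|\partial_tP_z(t)|,\,|\partial_tQ_z(t)|\le 2|e^{it}-z|^{-2}$ holds for both kernels. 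Finally, on $K_n$ one has $|e^{it}-z|\ge d$, and always $|e^{it}-z|\ge\frac12|e^{it}-e^{i\arg z}|$; hence $|e^{it}-z|\ge c\max\{d,|e^{it}-e^{i\arg z}|\}$ on $K_n$, and splitting the integral at $|e^{it}-e^{i\arg z}|\sim d$ gives $\int_{K_n}|e^{it}-z|^{-2}\,dt\le C/d$. Combining the last three displays yields $|G_{n+1}(z)-G_n(z)|\le C/(2^{n}d)$, and the estimate for $\widetilde{G_{n+1}}-\widetilde{G_n}$ comes out by the identical computation — which is precisely why both inequalities of the lemma hold with the same constant.

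I expect the one real obstacle to be not losing the factor $2^{-n}$: the naive bound $\left|\int_{\mathbb T}P_zh_n\right|\le\|P_z\|_{L^\infty}\,\|h_n\|_{L^1(\mathbb T)}$ is only of order $1/((n+1)(1-|z|))$, which is useless since $d$ may be as small as $1-|z|$. The whole saving comes from the mean-zero property of $h_n$ on \emph{every} rank-$n$ interval, which turns $P_z$ into its oscillation and the sum of oscillations into the total variation $\int_{K_n}|\partial_tP_z|$, and this is $O(1/d)$ precisely because $K_n$ keeps its distance $d$ from $z$. Beyond that I foresee no difficulty; in particular the conjugate case is genuinely identical, the uniform derivative bound $|\partial_t(P_z+iQ_z)(t)|=2|z|/|e^{it}-z|^{2}$ governing $P_z$ and $Q_z$ alike — one should resist replacing it by the sharper-looking $|\partial_tP_z(t)|\le C(1-|z|)/|e^{it}-z|^{3}$, which has no counterpart for $Q_z$ and is not needed here. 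Everything else is routine bookkeeping with the explicit values of $\sigma_n$ and $\tau_n$.
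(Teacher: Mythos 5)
Your argument is correct and is essentially the paper's own proof: both rest on the cancellation $\int_{I(n,k)}(g_{n+1}-g_n)=0$ together with the per-interval mass bound $\int_{I(n,k)}|g_{n+1}-g_n|\le C2^{-n}$, followed by an $L^1$ estimate of order $1/d(z,K_n^{\circ})$ for the $t$-derivative of the Poisson (resp.\ conjugate Poisson) kernel on the support of $g_{n+1}-g_n$. The only difference is bookkeeping: the paper extracts the factor $2^{-n}$ by a global integration by parts (the primitive of $g_{n+1}-g_n$ is bounded by $2^{-n}$) and bounds $\int|P_z'|$ by a two-case analysis, whereas you subtract a constant on each rank-$n$ interval and bound $\int_{K_n}|\partial_t P_z|$, $\int_{K_n}|\partial_t Q_z|$ simultaneously via $|\partial_t(P_z+iQ_z)|\le 2|e^{it}-z|^{-2}$ and $|e^{it}-z|\ge c\max\{d,|e^{it}-e^{i\arg z}|\}$ on $K_n$ --- the same mechanism.
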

\begin{proof}
For any $n$ and $k$, $\int_{I(n,k)}g_{n}=2^{-n}$. Subtracting we
get \begin{equation}
\int_{I(n,k)}\left(g_{n+1}(x)-g_{n}(x)\right)\, dx=0.\label{eq:fnfnmin1zero}\end{equation}
and because $g_{n+1}-g_{n}$ is non-zero only on the intervals $I(n,k)$,\begin{equation}
\Big|\int_{t}^{u}g_{n+1}(x)-g_{n}(x)\, dx\Big|\leq2^{-n}\quad\forall t,u\in[0,1],\,\forall n\label{eq:intgparts}\end{equation}
Write $G(z)=\int_{\mathbb{T}}g(t)P_{z}(t)$, where $P_{z}$ is the
Poisson kernel. We divide into two cases: if $1-|z|>\frac{1}{2}d(z,K_{n}^{\circ})$
then\[
\int_{\mathbb{T}}|P_{z}^{'}|\leq\frac{C}{(1-|z|)}\leq\frac{C}{d(z,K_{n}^{\circ})}.\]
(the first inequality is a well known property of the Poisson kernel).
On the other hand, if $1-|z|\leq\frac{1}{2}d(z,K_{n}^{\circ})$ then
$g_{n+1}-g_{n}$ is zero in an interval $J:=[t-cd(z,K_{n}^{\circ}),\linebreak[0]t+cd(z,K_{n}^{\circ})]$
for some $c$ sufficiently small, where $t$ is given by $e^{it}=z/|z|$,
and \[
\int_{\mathbb{T}\setminus J}|P_{z}^{'}|\leq\frac{C}{d(z,K_{n}^{\circ})}.\]
In either case , a simple integration by parts gives (\ref{eq:fnDfn1DdzK})
on $\mathbb{D}$. Finally, on $\partial\mathbb{D}\setminus K$ we
have $G_{n+1}(e^{it})-G_{n}(e^{it})=g_{n+1}(t)-g_{n}(t)=0$ for every
$t\not\in K_{n}$.

The proof for $\widetilde{G_{n}}$ is identical except the Poisson
kernel $P_{z}$ has to be replaced with the conjugate kernel $Q_{z}$.
\end{proof}
Let $G$ be the harmonic extension of $\mu$ into the unit disc i.e.\ the
Poisson transform of $\mu$ (equivalently you may define $G$ as the
limit of the $G_{n}$). Let $\widetilde{G}$ be the harmonic conjugate
of $G$. Let $\delta\in(0$,1) be some sufficiently small number (we
will fix its value later). Let $F=\exp\left(\delta(G+i\widetilde{G})\right)$.
We note the following properties of $F$
\begin{enumerate}
\item $F$ is unbounded in the unit disc.
\item $F$ has a boundary limit at almost every point of the boundary of
the disk, and this limit is uniform on every closed interval disjoint
from $K^{\circ}$. Denote the boundary limit by $f$,\[
f(t):=\lim_{z\to e^{it}}F(z).\]

\item The function $f$ is bounded.
\end{enumerate}
All properties are simple. The first follows from the fact that for
any $t\in K$, $\lim_{z\to e^{it}}G(z)=\infty$ and $|F(z)|=e^{\delta G(z)}$.
The second follows because $G$ and $\widetilde{G}$ have boundary
limits outside of $K$ --- recall that $G$ is the harmonic extension
of a measure $\mu$ supported on the compact $K$. The third follows
because for any $t\not\in K$ $\lim_{z\to e^{it}}G(z)=0$ and hence
$|f(t)|=1$.

We now define our distribution $S$ by {}``$F-f$'' or formally
by\[
\widehat{S}(m)=\widehat{F}(m)-\widehat{f}(m)\]
where $\widehat{F}$ are the Taylor coefficients of the analytic function
$F$ at $0$ namely\[
F(z)=\sum_{m=0}^{\infty}\widehat{F}(m)z^{m}\qquad\widehat{F}(-m)=0\,\forall m\in\mathbb{N}\]
and $\widehat{f}$ are the Fourier coefficients of $f$. Because $\widehat{F}(-m)=0$
and $f\in L^{2}$ we immediately get that $\widehat{S}\in l^{2}(\mathbb{Z}^{-})$.
The uniform convergence of $F$ outside $K$ shows that $S$ is supported
on $K$. Thus, for the corollary it is enough to show that $S$ is
a Schwartz distribution i.e.\ that $\widehat{S}(m)\le Cm^{C}$ for
some constant $C$, while for the theorem it is necessary to show
that $S$ is a pseudo-function i.e.\ that \[
\lim_{m\to\infty}\widehat{S}(m)=0.\]
Since this holds for $f$, we need only verify that $\widehat{F}(m)\to0$,
with probability $1$.

\subsection*{\noindent \label{sub:Taylor}The Taylor coefficients of $F$}

\noindent We will now show that with probability 1, $\widehat{F}(m)\rightarrow0$
as $m\rightarrow\infty$. The first step is to define $F_{n}=e^{\delta(G_{n}+i\widetilde{G_{n}})}$
and find some $n$ such that $\widehat{F_{n}}(m)$ approximates $\widehat{F}(m)$.
Summing (\ref{eq:fnDfn1DdzK}) and (\ref{eq:zntilde}) over $n$ we
get\[
|(G_{n}+i\widetilde{G_{n}})(z)-(G+i\widetilde{G})(z)|\leq\frac{C}{(1-|z|)2^{n}}.\]
Fix, therefore, $n=n(m):=\left\lceil C\log m\right\rceil $ for some
$C$ sufficiently large, and get, for every $z$ with $|z|=1-\frac{1}{m}$
that $|(G_{n}+i\widetilde{G_{n}})(z)-(G+i\widetilde{G})(z)|\leq1/m$.
Now, \[
\sup_{z\in\overline{\mathbb{D}}}|G_{n}(z)|=\sup_{t\in[0,2\pi]}|g_{n}(t)|=\frac{1}{|K_{n}|}=n\]
so $|F_{n}(z)|\le e^{\delta n}$. Hence for $|z|=1-\frac{1}{m}$,
\begin{equation}
|F_{n}(z)-F(z)|\leq|F_{n}(z)||1-\exp(\delta((G_{n}+i\widetilde{G_{n}})(z)-(G+i\widetilde{G})(z)))|\leq C\frac{e^{\delta n}}{m}\label{eq:Fn-F}\end{equation}
and if $\delta$ is taken sufficiently small, this is $\le Cm^{-1/2}$.
Finally we use \[
\widehat{F}(m)=\frac{1}{m!}F^{(m)}(0)=\int_{|z|=1-1/m}z^{-m-1}F(z)\, dz\]
so\begin{equation}
|\widehat{F_{n}}(m)-\widehat{F}(m)|=\left|\int_{|z|=1-1/m}z^{-m-1}(F_{n}(z)-F(z))\, dz\right|\leq Cm^{-1/2}\label{eq:FnFmhalf}\end{equation}
and we see that it is enough to calculate $\widehat{F_{n}}(m)$. At
this point the proof of the corollary is complete. Indeed, we may
write\[
\widehat{F_{n}}(m)\le\left\Vert F_{n}\right\Vert _{2}\le e^{\delta n}\le m^{\delta C}\]
and hence $\widehat{F}(m)\le Cm^{C}$, $S$ is a Schwartz distribution
and the corollary is proved. From now on we focus on proving the theorem.

Since $F_{n}$ is a bounded function on the disk, its Taylor coefficients
at $0$ are identical to the Fourier coefficients of its boundary
value. Denote the boundary value by $f_{n}(t)=F_{n}(e^{it})$. Thus
we have reduced our problem to that of estimating \[
\widehat{f_{n}}(m)=\int f_{n}(t)e^{-imt}\, dt.\]

Now, $f_{n}=e^{\delta(g_{n}+i\widetilde{g_{n}})}$ so on $K_{n}$
it is large ($|f_{n}(t)|=e^{\delta n}$ for any $t\in K_{n}$) while
outside of $K_{n}$ it has absolute value $1$. Let us first show
that the part outside of $K_{n}$ is irrelevant. As in (\ref{eq:Fn-F})
above we use lemma \ref{lem:GnG} and sum over $n$. We get\[
|f_{n}(t)-f(t)|\le\min\bigg(2,\frac{C}{2^{n}d(t,K)}\bigg)\quad\forall t\not\in K_{n}\]
and by integrating\[
\int_{[0,2\pi]\setminus K_{n}}|f_{n}(t)-f(t)|\, dt\le\int_{0}^{2\pi}\min\bigg(2,\frac{C}{2^{n}d(t,K)}\bigg)\, dt\xrightarrow[n\to\infty]{}0\]
since $K$ is a compact of measure zero. The convergence above is
uniform in the choice of the translations $s(n,k)$ that we used to
construct $K$. Since $\widehat{f}(n)\to0$, $f$ being a bounded
function, we get that it is enough to show\begin{equation}
\int_{K_{n}}f_{n}(t)e^{-imt}\, dt\to0\label{eq:intfnKn}\end{equation}
where the limit is as $m\to\infty$ and $n=\left\lceil C\log m\right\rceil $.
Here is where we will use that $K_{n}$ was a random set, and we will
show that this convergence holds for almost every choice of $s(n,k)$.

\subsection*{\label{sub:Probability}Probability}

Take $s(n,k)$ to be independent and uniformly distributed on $[0,1]$.
We shall estimate the integral (\ref{eq:intfnKn}) by moment methods.
Unfortunately, it seems we need the fourth moment. We start with a
lemma that contains the calculation we need without referring to analytic
functions
\begin{lem}
\label{lem:real}Let $I_{i}$ be $4$ intervals and let $\tau,\alpha,\beta>0$
be some numbers. Let $h_{1},h_{2},h_{3}$ be functions satisfying
\begin{align}
\int_{I_{i}}|h_{j}| & \leq\alpha, & \righteqn{i=j\mbox{ or }i=4\mbox{ and }j=3}\label{eq:estGiIi}\\
|h_{j}(x)| & =1,|h_{j}'(x)|\leq\beta,|h_{j}''(x)|\leq\beta^{2}\quad\forall x\in I_{i}+[-\tau,\tau] & \mbox{otherwise}\label{eq:estGtg}\end{align}
where {}``$+$'' stands for regular set addition. See table 1. Let
$t_{1}$ and $t_{2}$ be two random variables, uniformly distributed
on $[0,\tau]$, and let $t_{3}=t_{4}=0$. Define\begin{equation}
f(x)=f_{t_{1},t_{2}}(x):=h_{1}(x-t_{1})h_{2}(x-t_{2})h_{3}(x).\label{eq:defF}\end{equation}
Then\begin{equation}
E:=\left|\mathbb{E}\left(\prod_{i=1}^{4}\int_{I_{i}+t_{i}}f(x_{i})e^{-imx_{i}}\, dx_{i}\right)\right|\leq C\frac{\alpha^{4}}{m^{2}}\left(\max\beta,\frac{1}{\tau}\right)^{2}.\label{eq:defE}\end{equation}
\end{lem}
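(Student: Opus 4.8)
The plan is to view $E$ as an oscillatory double integral in the two random shifts $t_1,t_2$ (which are uniform on $[0,\tau]$, so $\mathbb{E}[\,\cdot\,]=\tau^{-2}\int_0^\tau\!\!\int_0^\tau[\,\cdot\,]$) and to extract the factor $m^{-2}\max(\beta,1/\tau)^2$ by integrating by parts twice. First I would, in each of the four integrals $J_i:=\int_{I_i+t_i}f(x_i)e^{-imx_i}\,dx_i$, substitute so that the argument of the one ``small'' factor becomes the variable of integration. In $J_1$ set $u=x_1-t_1$: then $u\in I_1$, while $u+t_1-t_2\in I_1+[-\tau,\tau]$ and $u+t_1\in I_1+[-\tau,\tau]$, so by (\ref{eq:estGtg}) the factors $h_2,h_3$ are of modulus one on the range of integration, and $J_1=e^{-imt_1}A_1(t_1,t_2)$ with
\[
A_1(t_1,t_2):=\int_{I_1}h_1(u)\,h_2(u+t_1-t_2)\,h_3(u+t_1)\,e^{-imu}\,du .
\]
Likewise $J_2=e^{-imt_2}A_2(t_1,t_2)$, where now $h_2$ is the small factor; and since $t_3=t_4=0$ no substitution is needed in $J_3,J_4$, which become $B_3(t_1,t_2),B_4(t_1,t_2)$ with $h_3$ the small factor in both (here one uses that $\int_{I_4}|h_3|\le\alpha$ as well) and $h_1(\cdot-t_1),h_2(\cdot-t_2)$ of modulus one. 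In every case (\ref{eq:estGiIi}) together with $|h_j|\equiv1$ on the remaining factors gives $|A_1|,|A_2|,|B_3|,|B_4|\le\alpha$, so, writing $\Phi:=A_1A_2B_3B_4$,
\[
E=\frac1{\tau^2}\Bigl|\int_0^\tau\!\!\int_0^\tau e^{-im(t_1+t_2)}\Phi(t_1,t_2)\,dt_1\,dt_2\Bigr|,\qquad |\Phi|\le\alpha^4 .
\]

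Next I would record the derivatives of $\Phi$. Each of the four amplitudes depends on $(t_1,t_2)$ only through shifts sitting inside ``nice'' factors, so differentiation produces $h_j'$ or $h_j''$, bounded by $\beta$ and $\beta^2$ by (\ref{eq:estGtg}); hence $|\partial_{t_1}A_i|,|\partial_{t_2}A_i|\le C\alpha\beta$ and $|\partial_{t_1}\partial_{t_2}A_i|\le C\alpha\beta^2$, and the same for $B_3,B_4$. The product rule then yields $|\partial_{t_1}\Phi|+|\partial_{t_2}\Phi|\le C\alpha^4\beta$ and $|\partial_{t_1}\partial_{t_2}\Phi|\le C\alpha^4\beta^2$.

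Finally I would integrate by parts once in $t_1$ and once in $t_2$ against the phase $e^{-im(t_1+t_2)}$. Each integration by parts contributes a factor $1/m$ and either a boundary term at $t_j\in\{0,\tau\}$ or a derivative landing on $\Phi$; collecting the three resulting types of contributions — the pure boundary term controlled by $\|\Phi\|_\infty\le\alpha^4$, the mixed term controlled by $\|\partial_{t_j}\Phi\|_\infty\le C\alpha^4\beta$, and the fully differentiated term controlled by $\int_0^\tau\!\!\int_0^\tau|\partial_{t_1}\partial_{t_2}\Phi|\le C\tau^2\alpha^4\beta^2$ — gives
\[
\Bigl|\int_0^\tau\!\!\int_0^\tau e^{-im(t_1+t_2)}\Phi\,dt_1\,dt_2\Bigr|\le\frac{C}{m^2}\bigl(\alpha^4+\tau\alpha^4\beta+\tau^2\alpha^4\beta^2\bigr)=\frac{C\alpha^4}{m^2}(1+\tau\beta)^2 .
\]
Dividing by $\tau^2$ gives $E\le C\alpha^4m^{-2}(1/\tau+\beta)^2\le C\alpha^4m^{-2}\max(\beta,1/\tau)^2$, which is (\ref{eq:defE}).

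The one genuinely necessary piece of care is the bookkeeping in the first step: for each $J_i$ one must check that after the substitution every factor other than the designated small one is evaluated on a set where (\ref{eq:estGtg}) applies, and this is precisely where the hypotheses $t_1,t_2\in[0,\tau]$, $t_3=t_4=0$, together with the $\tau$-neighbourhoods $I_i+[-\tau,\tau]$, are used; after that one only has to keep straight which of the three error terms carries which power of $\tau\beta$ through the double integration by parts. Beyond this there is no analytic obstacle — the estimate is a two-dimensional van der Corput type bound with the amplitude derivatives supplied by (\ref{eq:estGtg}), and the function $f=f_{t_1,t_2}$ of (\ref{eq:defF}) enters only through the four amplitudes $A_1,A_2,B_3,B_4$.
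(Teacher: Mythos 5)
Your proof is correct and is essentially the argument the paper intends: the paper merely cites \cite[lemma 9]{KO.04} and sketches exactly this mechanism --- two integrations by parts producing the $1/m$ factors from the exponential phase, $\beta$ factors from differentiating the $h_j$, and $1/\tau$ from the boundary terms --- which you carry out in the shift variables $t_1,t_2$ after extracting $e^{-im(t_1+t_2)}$ via the substitutions $u=x_i-t_i$, with the same bookkeeping of where (\ref{eq:estGiIi}) and (\ref{eq:estGtg}) apply. Note only that you implicitly use the independence of $t_1$ and $t_2$ when writing $\mathbb{E}=\tau^{-2}\int_0^\tau\!\int_0^\tau$ (without it the bound can degrade to order $m^{-1}$); this is indeed the intended hypothesis, as in the paper's application where the shifts come from independent $s(r,q_1),s(r,q_2)$.
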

\begin{proof}
See \cite[lemma 9, page 1050]{KO.04}. The proof is essentially nothing
more than two integrations by parts, each one giving a $\frac{1}{m}$
factor (from integrating $e^{-imx}$) and a $\beta$ factor from differentiating
the $h$-s. The $\frac{1}{\tau}$ factors come from the boundary conditions.
\end{proof}
\begin{table}
\begin{centering}
\begin{tabular}{|c|c|c|c|c|}
\cline{2-5} 
\multicolumn{1}{c|}{} & $I_{1}$ & $I_{2}$ & $I_{3}$ & $I_{4}$\tabularnewline
\hline 
$h_{1}$ & ${\displaystyle \vphantom{\int_{I_{M_{M}}}^{M}}}{\displaystyle \int_{I_{1}}|h_{1}|\le\alpha}$ & $h_{1}^{(k)}\le\beta^{k}$ & $h_{1}^{(k)}\le\beta^{k}$ & $h_{1}^{(k)}\le\beta^{k}$\tabularnewline
\hline 
$h_{2}$ & $h_{2}^{(k)}\le\beta^{k}$ & ${\displaystyle \vphantom{\int_{I_{M_{M}}}^{M}}}{\displaystyle \int_{I_{2}}|h_{2}|\le\alpha}$ & $h_{2}^{(k)}\le\beta^{k}$ & $h_{2}^{(k)}\le\beta^{k}$\tabularnewline
\hline 
$h_{3}$ & $h_{3}^{(k)}\le\beta^{k}$ & $h_{3}^{(k)}\le\beta^{k}$ & ${\displaystyle \vphantom{\int_{I_{M_{M}}}^{M}}}{\displaystyle \int_{I_{3}}|h_{3}|\le\alpha}$ & ${\displaystyle \int_{I_{4}}|h_{3}|\le\alpha}$\tabularnewline
\hline
\end{tabular}
\par\end{centering}

\caption{Relations between the functions $h$ and the intervals $I$. $k$
is 0, 1 or 2.}

\end{table}
 Continuing the proof of the theorem, for every $0\leq k<2^{n}$
denote \[
\mathcal{I}_{k}=\int_{I(n,k)}f_{n}(x)e^{-imx}\, dx\]
Recall that on $I(n,k)$ $|f_{n}|=e^{\delta n}$, so \begin{equation}
|\mathcal{I}_{k}|\leq\int_{I(n,k)}|f_{n}(x)|=\frac{2\pi}{n2^{n}}e^{\delta n}=:\alpha.\label{eq:defalpha}\end{equation}
In other words, $\alpha=\alpha(n)$ is a bound for $|\mathcal{I}_{k}|$
independent of $k$.
\begin{lem}
\label{lem:complex}Let $0\leq k_{1},k_{2},k_{3},k_{4}<2^{n}$ and
let $1\leq r<n$, and assume that the $I(n,k_{i})$ belong to at least
three different intervals of rank $r$. Then \[
\left|\mathbb{E}(\mathcal{I}_{k_{1}}\mathcal{I}_{k_{2}}\mathcal{I}_{k_{3}}\mathcal{I}_{k_{4}})\right|\leq\alpha^{4}\frac{Cn^{2}}{m^{2}\tau_{r}^{3}}.\]

\end{lem}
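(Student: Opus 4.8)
The plan is to derive Lemma \ref{lem:complex} from Lemma \ref{lem:real} by combining two structural features: $f_n$ factors as a product over the rank-$r$ intervals, and the random parameter $s(r,j)$ acts on $f_n$ as a rigid translation of a single factor. Write $g_n=\sum_{J}g_n^{J}$, where $J$ runs over the $2^{r}$ intervals of rank $r$ and $g_n^{J}=g_n\mathbf{1}_{J}$; since conjugation is linear, $f_n=\prod_{J}\phi_{J}$ with $\phi_{J}=\exp\bigl(\delta(g_n^{J}+i\widetilde{g_n^{J}})\bigr)$, and $|\phi_{J}|=e^{\delta g_n^{J}}$ equals $1$ off $J$. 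If $J=I(r,j)$ then, by (\ref{eq:defank}), the endpoint $a(r,j)$ — and hence every descendant interval of rank $>r$ — depends on $s(r,j)$ only through the additive term $\tau_{r}s(r,j)$; so varying $s(r,j)$ over $[0,1]$ translates $g_n^{J}$, and therefore $\phi_{J}$, rigidly by an amount uniform on an interval of length $\tau_{r}$, while leaving every $\phi_{J'}$, $J'\ne J$, unchanged.

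Since the four $I(n,k_{i})$ meet at least three rank-$r$ intervals, a short counting argument (at most one rank-$r$ interval can hold two of them) shows that at least two of them lie, one apiece, in rank-$r$ intervals containing none of the other three; relabel so that these are $I(n,k_{1})\subset J_{1}$ and $I(n,k_{2})\subset J_{2}$, with $J_{1}\ne J_{2}$, neither equal to the rank-$r$ interval of $I(n,k_{3})$ or of $I(n,k_{4})$. Freeze all the randomness except $s(r,j_{1})$ and $s(r,j_{2})$ (where $J_{i}=I(r,j_{i})$): these are independent and, by the above, produce translations $t_{1},t_{2}$, each uniform on an interval of length $\tau_{r}$, of $\phi_{J_{1}}$ and $\phi_{J_{2}}$. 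Put $h_{1}=\phi_{J_{1}}$, $h_{2}=\phi_{J_{2}}$, $h_{3}=\prod_{J\ne J_{1},J_{2}}\phi_{J}$ (at the frozen configuration), so that $f_n(x)=h_{1}(x-t_{1})h_{2}(x-t_{2})h_{3}(x)$ and $\mathcal{I}_{k_{i}}=\int_{I(n,k_{i})+t_{i}}f_n(x)e^{-imx}\,dx$ with $t_{3}=t_{4}=0$. This is exactly the setting of Lemma \ref{lem:real} with $\tau=\tau_{r}$; since the bound there is uniform in the remaining data, averaging over the frozen variables afterwards costs nothing, and it remains to verify the hypotheses with appropriate $\alpha$ and $\beta$.

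The constant $\alpha$ of (\ref{eq:defalpha}) works: on a rank-$n$ interval inside $J_{1}$ every $\phi_{J'}$ with $J'\ne J_{1}$ has modulus $1$, so $\int_{I(n,k_{1})}|h_{1}|=\int_{I(n,k_{1})}|f_n|=\sigma_{n}e^{\delta n}=\alpha$, and likewise $\int_{I(n,k_{2})}|h_{2}|=\alpha$ and $\int_{I(n,k_{3})}|h_{3}|=\int_{I(n,k_{4})}|h_{3}|=\alpha$ (using that $J_{1},J_{2}$ avoid $I(n,k_{3})$ and $I(n,k_{4})$). For the remaining cells of Table 1 one needs, on the $\tau_{r}$-neighbourhood of some $I(n,k_{i})$, that the relevant product of $\phi$'s has modulus $1$ with first and second derivatives $O(\beta)$ and $O(\beta^{2})$. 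The modulus-$1$ claim holds because that neighbourhood misses every rank-$r$ interval except the one containing $I(n,k_{i})$, which is precisely what the gaps built into (\ref{eq:deftaun}) guarantee (the factor $\tfrac16$ makes sibling rank-$r$ intervals at least $2\tau_{r}$ apart, and each $I(n,k_{i})$ with its $\tau_{r}$-neighbourhood sits well inside its rank-$r$ interval). There $(\log\phi_{J'})'=i\delta(\widetilde{g_n^{J'}})'$; since $g_n^{J'}$ has mass $2^{-r}$ and the distances from $I(n,k_{i})$ to the other rank-$r$ intervals grow at least geometrically from a scale $\gtrsim r\tau_{r}$, the series $\sum_{J'}|(\widetilde{g_n^{J'}})'|$ and $\sum_{J'}|(\widetilde{g_n^{J'}})''|$ converge and are $O(1/\tau_{r})$ and $O(1/\tau_{r}^{2})$; the single factors $h_{1},h_{2}$ are controlled the same way. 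Hence one may take $\beta=C/\tau_{r}$, and Lemma \ref{lem:real} yields $|\mathbb{E}(\mathcal{I}_{k_{1}}\mathcal{I}_{k_{2}}\mathcal{I}_{k_{3}}\mathcal{I}_{k_{4}})|\le C\alpha^{4}/(m^{2}\tau_{r}^{2})$, which, $\tau_{r}$ being $<1$, is even stronger than the asserted $Cn^{2}\alpha^{4}/(m^{2}\tau_{r}^{3})$.

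The main obstacle is this last verification. Its two delicate points are: (a) checking that every $\tau_{r}$-neighbourhood occurring in Table 1 avoids $K_n$ outside the designated rank-$r$ interval, which forces one to use the precise constants of the construction rather than a soft argument; and (b) bounding the derivatives of $h_{3}$, a product of $2^{r}-2$ factors — here the saving feature is that $\log h_{3}$ is a sum of conjugate functions of masses $2^{-r}$ evaluated at dyadically increasing distances, so termwise differentiation yields a geometric series. Everything else is a direct appeal to Lemma \ref{lem:real}.
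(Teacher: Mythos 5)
Your overall strategy is the same as the paper's: freeze all randomness except the two rank-$r$ parameters attached to the two ``solitary'' intervals (the paper conditions on the $\sigma$-field $\mathcal{X}$), observe that these parameters act as rigid translations $t_i=\tau_r s(r,q_i)$ of the factors living on $J_1,J_2$ while the rest of $f_n$ is frozen, take $\alpha=\sigma_n e^{\delta n}$ as in (\ref{eq:defalpha}), and feed everything into Lemma \ref{lem:real} with $\tau=\tau_r$; your counting argument selecting $k_1,k_2$ is exactly the paper's ``the two $q_i$ which may coincide are $q_3,q_4$''. The one place you genuinely deviate is the verification of (\ref{eq:estGtg}), and there your argument as written fails. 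You claim the distances from $I(n,k_i)+[-\tau_r,\tau_r]$ to the other rank-$r$ intervals ``grow at least geometrically from a scale $\gtrsim r\tau_r$'' and deduce $\beta=C/\tau_r$. But by (\ref{eq:deftaun})--(\ref{eq:defank}) the gap between sibling rank-$r$ intervals is $3\tau_r+\tau_r(s'-s)\in[2\tau_r,4\tau_r]$: it is the interval \emph{lengths} $\sigma_r=3(r-1)\tau_r$ that carry the factor $r$, not the gaps, which are only a $\sim 1/r$ fraction of the local scale (this is forced by $|K_n|=2\pi/n$ decaying slowly). With the correct base scale $\sim\tau_r$, your own mass-times-kernel computation gives $\sum_{J'}|(\widetilde{g_n^{J'}})'|\lesssim 2^{-r}\tau_r^{-2}\approx r^2/\tau_r$ and $\sum_{J'}|(\widetilde{g_n^{J'}})''|\lesssim 2^{-r}\tau_r^{-3}\approx r^2/\tau_r^{2}$, not $O(1/\tau_r)$ and $O(1/\tau_r^2)$; so $\beta=C/\tau_r$, and hence your claimed bound $C\alpha^4/(m^2\tau_r^2)$, is not justified. (A smaller inaccuracy: the $\tau_r$-neighbourhood of $I(n,k_i)$ need \emph{not} sit inside its rank-$r$ interval, since the distance from $K_n\cap I(r,q)$ to $\partial I(r,q)$ can be below $\tau_r$; what saves the modulus-one claim is only that this neighbourhood stays at distance $\ge\tau_r$ from every \emph{other} rank-$r$ interval, because sibling gaps are $\ge 2\tau_r$.)

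The slip is quantitative and repairable inside your framework: taking $\beta=Cr^2/\tau_r$ (or even the crude $\beta=C2^{-r}/\tau_r^{2}$) in Lemma \ref{lem:real} gives $|\mathbb{E}(\mathcal{I}_{k_1}\mathcal{I}_{k_2}\mathcal{I}_{k_3}\mathcal{I}_{k_4})|\le C\alpha^4 r^4/(m^2\tau_r^2)$, which still implies the stated estimate since $r^4\tau_r\le C\le n^2$, i.e.\ $r^4/\tau_r^2\le Cn^2/\tau_r^3$. The paper sidesteps the pointwise geometry entirely: it bounds $|\widetilde{\eta}_j'|$ and $|\widetilde{\eta}_j''|$ by Cauchy--Schwarz, $\|\eta_j\|_2\,\|H'|_{\mathbb{T}\setminus[-\tau_r,\tau_r]}\|_2\le Cn\tau_r^{-3/2}$ and $\|\eta_j\|_2\,\|H''|_{\mathbb{T}\setminus[-\tau_r,\tau_r]}\|_2\le Cn\tau_r^{-5/2}$, i.e.\ $\beta=C n\tau_r^{-3/2}$, which is precisely what produces the $n^2/(m^2\tau_r^3)$ in the statement. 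So: same architecture, but the derivative estimate needs to be corrected (either as above or by the paper's $L^2$ argument) before the appeal to Lemma \ref{lem:real} is legitimate.
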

(recall that $\tau_{r}$ were defined in (\ref{eq:deftaun}))
\begin{proof}
Define $q_{1},\dotsc,q_{4}$ using $I(n,k_{i})\subset I(r,q_{i})$.
We may assume without loss of generality that the two $q_{i}$-s which
may be equal are $q_{3}$ and $q_{4}$. Let $\mathcal{X}$ be the
$\sigma$-field spanning all $s$ \emph{except} $s(r,q_{1})$ and
$s(r,q_{2})$. We shall show\[
\left|\mathbb{E}(\mathcal{I}_{k_{1}}\mathcal{I}_{k_{2}}\mathcal{I}_{k_{3}}\mathcal{I}_{k_{4}}|\mathcal{X})\right|\leq\alpha^{4}\frac{Cn^{2}}{m^{2}\tau_{r}^{3}}\]
and then integrating over $\mathcal{X}$ will give the result. We
note that conditioning on $\mathcal{X}$ is in effect fixing everything
except the positions of $I(r,q_{1})$ and $I(r,q_{2})$ inside $I(r-1,\left\lfloor q_{i}/2\right\rfloor )$.
Denote $J_{j}:=I(r,q_{j})$ ($j=1,2$) and $J_{3}=\mathbb{T}\setminus(J_{1}\cup J_{2})$.
Assume for a moment that $s(r,q_{1})=s(r,q_{2})=0$ and define, using
this assumption,\begin{equation}
\begin{aligned}\eta_{j} & :=g_{n}|_{J_{j}} & j & =1,2,3, & h_{j} & :=e^{\eta_{j}+i\widetilde{\eta}_{j}},\\
I_{i} & :=I(n,k_{i}) & i & =1,2,3,4.\end{aligned}
\label{eq:defIi}\end{equation}
Under the assumption $s(r,q_{1})=s(r,q_{2})=0$ we clearly have $f_{n}=h_{1}h_{2}h_{3}$
and when we remove this assumption, the only change is a translation
of $h_{1}$ and $h_{2}$. In other words, if we define $t_{i}=s(r,q_{i})\tau_{r}$
then $f_{n}(x)=h_{1}(x-t_{1})h_{2}(x-t_{2})h_{3}(x)$. Examining (\ref{eq:defF})
we see that $|\mathbb{E}(\mathcal{I}_{k_{1}}\mathcal{I}_{k_{2}}\mathcal{I}_{k_{3}}\mathcal{I}_{k_{4}}|\mathcal{X})|=E$
where $E$ is defined by (\ref{eq:defE}); where the $I_{i}$ of (\ref{eq:defE})
are the same as those of (\ref{eq:defIi}); and where the $\tau$
of (\ref{eq:defE}) is $\tau_{r}$ and where the $\alpha$ of (\ref{eq:estGiIi})
is our $\alpha$. To make (\ref{eq:defE}) concrete we need to specify
a value for the $\beta$ of (\ref{eq:estGtg}) and prove that it holds.
We define\[
\beta=C_{1}\frac{n}{\tau_{r}^{3/2}}\]
for some sufficiently large constant $C_{1}$ to be fixed later. Notice
that $\beta$ is obviously larger than $1/\tau_{r}$. With all these,
lemma \ref{lem:complex} would follow from lemma \ref{lem:real} once
we show (\ref{eq:estGtg}).

Examining the definitions of $\eta_{j}$ and $I(n,k)$ it is easy
to see that $\eta_{j}(x)=0$ for $x\in I_{i}+[-\tau,\tau]$ when $i\ne j$
except when $i=3$ and $j=4$ (recall that in (\ref{eq:defank}) we
left a little space in the sides of the intervals --- this is the
reason). This immediately shows $|h_{j}(x)|=1$. Further, $h_{j}'=h_{j}(\eta_{j}'+i\widetilde{\eta}_{j}')$
gives $|h_{j}'|=|\widetilde{\eta}_{j}'|$ and $h_{j}''=h_{j}((\eta_{j}'+i\widetilde{\eta}_{j}')^{2}+\eta_{j}''+i\widetilde{\eta}_{j}'')$
gives $|h_{j}''|\leq|\widetilde{\eta}_{j}'|^{2}+|\widetilde{\eta}_{j}''|$.
Now, the derivatives of $\widetilde{\eta}_{j}$ have the representations
\[
\widetilde{\eta}_{j}'(x)=\int_{\mathbb{T}}\eta_{j}(x-t)H'(t)\, dt\quad\widetilde{\eta}_{j}''(x)=\int_{\mathbb{T}}\eta_{j}(x-t)H''(t)\, dt\]
where $H$ is the Hilbert kernel. This again works because $\eta_{j}$
is zero in a neighborhood of $x$, otherwise there would be extra
terms (the derivatives of $H$ are distributions with a singular part
at $0$, but here we may consider them as just functions). We may
therefore estimate\begin{equation}
\widetilde{\eta}_{j}'(x)\leq\left\Vert \eta\right\Vert _{2}\left\Vert H'|_{\mathbb{T}\setminus[-\tau_{r},\tau_{r}]}\right\Vert _{2}\qquad\widetilde{\eta}_{j}''(x)\leq\left\Vert \eta\right\Vert _{2}\left\Vert H''|_{\mathbb{T}\setminus[-\tau_{r},\tau_{r}]}\right\Vert _{2}.\label{eq:utilde}\end{equation}
Now, it is well known that for any $D\ge0$, the $D^{\textrm{th}}$
derivative of $H$ satisfies \begin{equation}
|H^{(D)}(t)|\leq\frac{C(D)}{|e^{it}-1|^{D+1}}.\label{eq:Hilbert}\end{equation}
So \[
\left\Vert H'|_{[-\tau_{r},\tau_{r}]^{c}}\right\Vert _{2}\approx\tau_{r}^{-3/2}\qquad\left\Vert H''|_{[-\tau_{r},\tau_{r}]^{c}}\right\Vert _{2}\approx\tau_{r}^{-5/2}.\]
and since $\left\Vert \eta\right\Vert _{2}\leq Cn$ we get the estimate
we need:\[
|h_{j}'|\leq C\tau_{r}^{-3/2}n\qquad|h_{j}''|\leq C\tau_{r}^{-3}n^{2}.\]
We may now fix the value of $C_{1}$ from the definition of $\beta$.
With this the conditions of lemma \ref{lem:real} are fulfilled and
we are done.
\end{proof}

\begin{proof}
[End of the proof of theorem \ref{thm:thickness}]We return to the
bound on \[
X:=\int_{K_{n}}f_{n}(t)e^{-imt}\, dt.\]
We shall do so by estimating $\mathbb{E}|X|^{4}$. Let\[
E(k_{1},k_{2},k_{3},k_{4}):=\left|\mathbb{E}\prod\mathcal{I}_{k_{i}}\right|\]
let $r(k_{1},\dotsc,k_{4})$ be the minimal $r$ such that the $I(n,k_{i})$-s
are contained in at least $3$ distinct intervals of rank $r$. A
simple calculation shows \[
\#\{(k_{1},\dotsc,k_{4}):r(k_{1},\dotsc,k_{4})=r\}\approx2^{4n-2r}.\]

If $\tau_{r}$ is too small then the estimate of lemma \ref{lem:complex}
is useless and it would be better to estimate $|E(k_{1},\dotsc,k_{4})|\leq\alpha^{4}$.
Let $R$ be some number. Then for large $r$ we have the estimate\begin{equation}
E_{1}:=\sum_{r(k_{1},\dotsc,k_{4})\geq R}E(k_{1},\dotsc,k_{4})\leq C\alpha^{4}2^{4n-2R}\stackrel{(\ref{eq:defalpha})}{\le}Ce^{4\delta n}2^{-2R}\le Cm^{C\delta}2^{-2R}.\label{eq:E1}\end{equation}

For small $r$ we use the lemma to get a better estimate. Examine
one such $k_{1},\dotsc,k_{4}$ and let $r=r(k_{1},\dotsc,k_{4})$.
Lemma \ref{lem:complex} gives\[
E(k_{1},\dotsc,k_{4})\leq\alpha^{4}\frac{Cn^{2}}{m^{2}\tau_{r}^{3}}=\frac{\alpha^{4}}{m^{2-o(1)}\tau_{r}^{3}}\quad.\]
Therefore\begin{align}
E_{2}: & =\sum_{r(k_{1},\dotsc,k_{4})<R}E(k_{1},\dotsc,k_{4})\leq\alpha^{4}2^{4n}m^{-2+o(1)}\sum_{r=1}^{R}2^{-2r}\tau_{r}^{-3}\stackrel{(\ref{eq:deftaun},\ref{eq:defalpha})}{=}\label{eq:E2}\\
 & =m^{-2+C\delta+o(1)}\sum_{r=1}^{R}2^{r+o(r)}=m^{-2+C\delta+o(1)}2^{R+o(R)}.\nonumber \end{align}
Taking $R=\left\lfloor \frac{2}{3}\log_{2}m\right\rfloor $ and summing
(\ref{eq:E1}) and (\ref{eq:E2}) we get\begin{equation}
\mathbb{E}|X|^{4}\leq m^{-4/3+C\delta+o(1)}.\label{eq:EX4m43}\end{equation}
With $\delta$ sufficiently small this is summable. Hence \[
\mathbb{E}\sum_{m}\left|X_{m}\right|^{4}=\sum\mathbb{E}\left|X_{m}\right|^{4}<\infty\]
In particular, with probability $1$, $X_{m}\to0$. As remarked above,
this shows that $\widehat{f_{n}}(m)\rightarrow0$ and hence $\widehat{F}(m)\to0$
which concludes the theorem.
\end{proof}
Let us remark that we do not know if the probabilistic part of the
proof is really necessary. In fact it is quite possible that the non-probabilistic
construction (i.e.\ setting all $s(n,k)$ to 0) also satisfies that
$\widehat{F}(m)\to0$. What we do know is that the theorem does not
follow formally from the corollary, i.e.\ there exists a singular
distribution $S$ with $\widehat{S}(n)\in l^{2}(\mathbb{Z}^{-})$
which is not a pseudo-function. The construction is very similar to
that of the theorem except one takes $\delta$ --- the $\delta$ from
the definition $F=\exp(\delta(G+i\widetilde{G}))$ --- large rather
than small. It then follows that $\left\Vert F_{n}\right\Vert _{2}$
is a large power of $m$ leading to $\widehat{F}(m)$ being unbounded.
We omit all other details.

\section*{III}

Let now $K$ be some fixed compact. Suppose $K$ supports a non-trivial
measure with some one-sided smoothness property. Does it imply that
$K$ in fact supports a measure with a two-sided property? What about
distributions? We summarize some relations of this sort in the tables
in figure \ref{fig:K}.%
\begin{figure}
\input{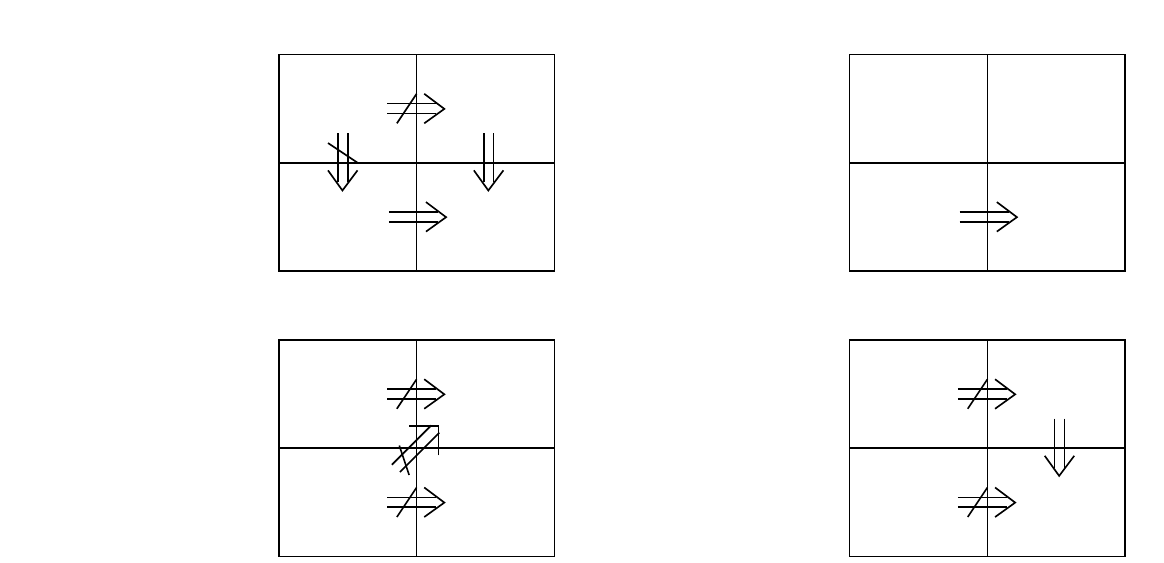_t}

\caption{\label{fig:K}Properties for specific compacts.}

\end{figure}
 Each arrow denotes an implication which holds for any compact. For
example, the arrow labelled {}``B'' in the top right corner is a
result of Beurling that any compact $K$ which supports a distribution
$S$ with $\sum|\widehat{S}(n)|^{2}|n|^{\alpha-1}<\infty$ also supports
a measure $\mu$ with the same property \cite{B49}. In the top-right
table we assume $\alpha<1$ and in the bottom-left $2<q<\infty$.
The abbreviations are KO for \cite{KO.03}, Ri for the Riesz analyticity
theorem, PS for Piatetski-Shapiro \cite{P54} and Ra for Rajchman
\cite{R29}. The arrow marked LO comes from \cite{LO.05} and noting
that the set constructed in \cite{LO.05} is a Helson set, so it cannot
support even a measure $\mu$ with $\widehat{\mu}(n)\to0$. The unmarked
arrow on the top-left is trivial, while the unmarked arrow on the
bottom-right and the two unmarked arrows on the bottom left follow
from the other arrows in their diagrams. All missing arrows are unknown
to us.

Our last result is to show that a certain {}``non-symmetry'' is
possible for singular measures supported by compacts of dimension
$\alpha<1$. It does not fit in any of the tables above, but it is
very close in spirit. 
\begin{thm}
\label{thm:asym}Let $K$ be a compact on $\mathbb{T}$ , $\dim_{\F}(K)=d>0$.
Then for every $p>2/d$ there exists a (complex) measure $\nu\in\mathcal{M}(K)$
such that\[
\widehat{\nu}\in l^{p}(\mathbb{Z}_{-}),\qquad\widehat{\nu}\notin l^{p}(\mathbb{Z}_{+}).\]

\end{thm}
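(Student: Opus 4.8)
The plan is to produce $\nu$ as a "twisted" Salem measure on $K$, imitating the device of Section~II: realize $\nu$ as (essentially) the boundary difference $F-f$ of an analytic function $F$ on the disc built from a Salem measure $\mu\in\mathcal M(K)$ and its a.e.\ boundary value $f$ — such a difference is automatically supported where $F$ is singular, i.e.\ on $K$, and its negative Fourier coefficients equal $-\widehat f|_{\mathbb Z_-}$, which we will be able to keep in $l^2\subset l^p$. Using $\dim_{\F}K=d$, I would first fix $a$ with $2/p<a<d$ and a probability measure $\mu\in\mathcal M(K)$ with $|\widehat\mu(n)|\le C|n|^{-a/2}$; in particular $\widehat\mu\in l^{p}(\mathbb Z)$, and from the finiteness of the energy $I_{s}(\mu)\asymp\sum_n|\widehat\mu(n)|^{2}|n|^{s-1}$ for $s<a$ one gets that $\mu$ is Frostman-regular, $\mu(B(x,r))\le C_xr^{s}$, at $\mu$-a.e.\ point $x$. (Note that by Lemma~\ref{lem:Berman} a nonzero measure on $K$ with $\dim K<1$ cannot have $\widehat\nu\in l^{2}(\mathbb Z_-)$; the content of the theorem is exactly that one can afford the weaker $l^{p}$ on the left while simultaneously losing $l^{p}$ on the right.)

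A concrete model for the construction: translate a Frostman-regular point of $\mu$ to the origin and take $\nu=g\mu$, where $g$ is an analytic function with a mild integrable singularity $\lesssim|t|^{c-1}$ at $0$ — say $\widehat g(n)\asymp n^{-c}$ on $\mathbb Z_+$ with $c>1-s$ so that $g\in L^1(\mu)$ — so that $\nu$ is a finite complex measure supported on $K$. Here $g$ must be genuinely singular on $K$: if it were continuous near $K$ one could replace it by a globally smooth function without changing $g\mu$, which would force $\widehat\nu\in l^{p}$ on both sides and kill the asymmetry. The negative side is then the structurally easy half: since $\widehat g$ is carried by $\mathbb Z_+$, one has $\widehat\nu(-M)=\sum_{k\ge1}\widehat g(k)\,\overline{\widehat\mu(M+k)}$, which involves only the high-frequency — hence small — coefficients of $\mu$; summing against $|\widehat\mu(m)|\le C|m|^{-a/2}$ (sharpened, if needed, by the square-root cancellation a random $\mu$ provides among the $\widehat\mu(M+k)$) gives $\widehat\nu\in l^{p}(\mathbb Z_-)$.

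The positive side is the crux, and where I expect essentially all the work to be. There $\widehat\nu(M)=\sum_{k}\widehat g(k)\widehat\mu(M-k)$ does feel the full mass of $\mu$ — the $k=M$ term is $\widehat g(M)\widehat\mu(0)$ — but an $|{\cdot}|^{-a/2}$ upper bound on $\widehat\mu$ is useless for a \emph{lower} bound on $|\widehat\nu(M)|$: the putative main term $\widehat g(M)$ may be annihilated by the convolution tail, and one cannot control $|\widehat\nu(M)|$ from below without information on the phases of $\widehat\mu$. I would resolve this exactly as the analogous obstacles in Sections~I–II are resolved, by making the construction probabilistic (randomizing the Salem measure on $K$, or the twist $g$) and running a second- or fourth-moment estimate to show that almost surely $|\widehat\nu(M)|$ is bounded below along a set of $M$ of positive density, so that $\sum_{M>0}|\widehat\nu(M)|^{p}=\infty$, while the same randomness produces only cancellation, never reinforcement, in $\widehat\nu(-M)$ — preserving the previous paragraph — and while the remaining parameters (the exponent $c$, or a small $\delta$ as in the proof of Theorem~\ref{thm:thickness}) are tuned so that the object stays a genuine measure rather than a distribution. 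The hypothesis $p>2/d$ is precisely the range that leaves room for a choice of $a\in(2/p,d)$ and a compatible decay rate for which these two one-sided estimates hold at once.
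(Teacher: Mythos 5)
Your reduction and the negative-side estimate are fine, but the proof has a genuine gap exactly where you yourself locate ``essentially all the work'': nothing in the proposal establishes $\widehat{\nu}\notin l^{p}(\mathbb{Z}_{+})$. Second- or fourth-moment estimates of the kind used in Section~II give \emph{upper} bounds on $\mathbb{E}|\widehat{\nu}(M)|^{2}$ or $\mathbb{E}|\widehat{\nu}(M)|^{4}$, i.e.\ decay; they cannot produce the anticoncentration/lower bounds you need, and no mechanism for such lower bounds is proposed. Worse, the specific target you set --- $|\widehat{\nu}(M)|\ge c>0$ on a set of $M\in\mathbb{Z}_{+}$ of positive density --- is impossible for \emph{any} measure with $\widehat{\nu}\in l^{p}(\mathbb{Z}_{-})$. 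Indeed, let $\sigma$ be the positive measure with $\widehat{\sigma}(n)=|\widehat{\nu}(n)|^{2}$ (the convolution of $\nu$ with its reflected conjugate); then $\frac1N\sum_{n=1}^{N}|\widehat{\nu}(\mp n)|^{2}=\int\frac1N\sum_{n=1}^{N}e^{\pm inx}\,d\sigma(x)\to\sigma(\{0\})=\sum_{x}|\nu(\{x\})|^{2}$, a one-sided Wiener argument. Since $\widehat{\nu}\in l^{p}(\mathbb{Z}_{-})$ forces the negative-side averages to vanish, $\nu$ is non-atomic and the positive-side Ces\`aro averages of $|\widehat{\nu}|^{2}$ vanish too, ruling out your scenario. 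What you actually need is only $\sum_{M>0}|\widehat{\nu}(M)|^{p}=\infty$, but the proposal contains no route to that either: as you note, the putative main term $\widehat{g}(M)\widehat{\mu}(0)$ in $\widehat{g\mu}(M)$ can be cancelled by the convolution tail, and randomizing $\mu$ or $g$ does not by itself prevent this.

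The paper's proof is quite different and avoids pointwise lower bounds altogether. It takes $\mu\in\mathcal{M}(K)$ positive with $\widehat{\mu}\in l^{p}(\mathbb{Z})$ (available since $p>2/d$) and builds $\nu=\sum_{k}\nu_{k}$ with $\nu_{k}=g_{k}(lt)\,\mu$, where $g_{k}(t)=4^{-k}\sum_{j}e^{iq(j)t}$ is a sparse trigonometric polynomial whose frequencies all exceed a threshold $s_{k}$ chosen so that $\|\widehat{\mu}\|_{l^{p}(-\infty,-s_{k})}<2^{-k}$. Modulation to high positive frequencies makes the negative side of $\widehat{\nu_{k}}$ tiny; the asymmetry is detected not coefficientwise but via a lower bound for the \emph{block norm} $\|\widehat{\nu_{k}}\|_{l^{p}(I_{k})}$ on disjoint finite intervals $I_{k}\subset\mathbb{Z}_{+}$, which is immune to the non-atomicity obstruction above; and Lemma~\ref{lem:ol1} (a Wiener-type dilation lemma) keeps $\|\nu_{k}\|_{\mathcal{M}(\mathbb{T})}\le 2\|g_{k}\|_{2}=2^{-k+1}$, so the series converges in measure norm and the blocks' contributions survive in the sum. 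No probability is involved. To salvage your $\nu=g\mu$ ansatz you would have to replace the pointwise lower bound by some such block-norm statement, which in effect means redoing the paper's construction.
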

(recall the definition of the Fourier dimension on page \pageref{def:Fdim}).
\begin{lem}
\label{lem:ol1}Let $\mu$ be a non-atomic positive measure, $g$
be a function in $C(\mathbb{T})$. Then for every $\delta>0$ there
is a positive integer $l$ such that \begin{equation}
\int\left|g(lt)\right|d\mu<\int|g(t)|d\mu+\delta\label{eq:lem9}\end{equation}
\end{lem}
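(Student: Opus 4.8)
The plan is to deduce this from the equidistribution of the sequence $(lt)_{l\ge 1}$ on $\mathbb T$, which holds as soon as $t/2\pi$ is irrational, while the non-atomicity of $\mu$ is exactly what lets us discard the countably many rational $t$. Put $A:=\frac1{2\pi}\int_0^{2\pi}|g(s)|\,ds$. The core step I would prove is the Cesàro convergence
\[
\frac1N\sum_{l=1}^{N}\int_{\mathbb T}|g(lt)|\,d\mu(t)\ \longrightarrow\ A\,\mu(\mathbb T)\qquad(N\to\infty).
\]
Granting this, the conclusion is immediate: a bounded sequence has $\liminf$ no larger than the limit of its Cesàro averages, so $\liminf_{l\to\infty}\int|g(lt)|\,d\mu\le A\,\mu(\mathbb T)$, and hence for every $\delta>0$ there are arbitrarily large integers $l$ with $\int|g(lt)|\,d\mu<A\,\mu(\mathbb T)+\delta$. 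If $A\,\mu(\mathbb T)\le\int|g(t)|\,d\mu$ such an $l$ does the job; in the remaining case one may simply take $l=1$, which satisfies the stated inequality trivially. (In the intended application $A\,\mu(\mathbb T)$ is the relevant right-hand quantity and one genuinely obtains $l$ as large as desired.)

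To prove the displayed limit I would first apply Fubini to rewrite the left side as $\int_{\mathbb T}\bigl(\frac1N\sum_{l=1}^{N}|g(lt)|\bigr)d\mu(t)$. For each fixed $t$ with $t/2\pi\notin\mathbb Q$, Weyl's equidistribution theorem, applied to the continuous function $|g|$, gives $\frac1N\sum_{l=1}^{N}|g(lt)|\to A$. The set of $t\in\mathbb T$ with $t/2\pi\in\mathbb Q$ is countable, hence $\mu$-null since $\mu$ has no atoms, so this pointwise convergence holds $\mu$-almost everywhere; as the averages are bounded by $\|g\|_\infty$ and $\mu$ is finite, dominated convergence gives the limit. (One can avoid Weyl's theorem: approximate $|g|$ uniformly by a trigonometric polynomial, so that $\int|g(lt)|\,d\mu$ differs by $o(1)$ from $A\,\mu(\mathbb T)+\sum_{k\ne0}\widehat{|g|}(k)\,\widehat\mu(-kl)$, and use that $\frac1N\sum_{l\le N}\widehat\mu(-kl)\to\mu(\{t:kt\in2\pi\mathbb Z\})=0$ for each $k\ne0$ — again by non-atomicity.)

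I do not anticipate a real obstacle; the single delicate point, and the only place the hypothesis on $\mu$ is used, is passing from the pointwise equidistribution — valid off a countable set — to the integrated statement, which is exactly where one invokes that a non-atomic finite measure gives zero mass to countable sets. Everything else (Fubini, dominated convergence, the comparison of a sequence with its averages, and the trivial case $l=1$) is routine.
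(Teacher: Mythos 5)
Your argument is correct, and you have rightly identified the real content of the lemma: as literally written, \eqref{eq:lem9} is trivially satisfied by $l=1$, and what is actually needed (and what both your proof and the paper's proof deliver) is the existence of arbitrarily large $l$ with $\int|g(lt)|\,d\mu$ bounded by $\widehat{g}(0)\,\widehat{\mu}(0)+\delta$, i.e.\ your $A\,\mu(\mathbb{T})+\delta$ up to normalization --- this is the form invoked in the proof of Theorem \ref{thm:asym}, the $d\mu$ on the right-hand side of \eqref{eq:lem9} notwithstanding. Your route is genuinely different from the paper's. The paper stays entirely on the Fourier side: after reducing to a positive trigonometric polynomial it writes $\int g(lt)\,d\mu=\sum\widehat{g}(n)\overline{\widehat{\mu}(ln)}$ and, in the case $\widehat{\mu}(n)=o(1)$ (the only one needed for Theorem \ref{thm:asym}), simply lets $l\to\infty$, obtaining the conclusion for \emph{all} sufficiently large $l$; for a general non-atomic $\mu$ it invokes Wiener's theorem and averages over $l$ in a dyadic block to locate good values of $l$. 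You instead argue pointwise: Weyl equidistribution of $(lt)_{l\ge1}$ for $t/2\pi$ irrational, non-atomicity to discard the countable set of rational points, dominated convergence, and the comparison of a bounded sequence with its Ces\`aro averages. This buys a proof that never inspects $\widehat{\mu}$ and treats the general non-atomic case in one stroke, at the harmless price of producing only a subsequence of good $l$'s rather than all large $l$ (arbitrarily large $l$ is all the application requires); your parenthetical variant, resting on $\frac1N\sum_{l\le N}\widehat{\mu}(-kl)\to\mu(\{t:kt\in2\pi\mathbb{Z}\})=0$ for $k\ne0$, is essentially the paper's Wiener-theorem argument in disguise, so the two approaches meet there.
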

\begin{proof}
Clearly it is enough to consider $g$ as a positive trigonometric
polynomial. Assume first that $\widehat{\mu}(n)=o(1)$ as $|n|\to\infty$
(the only case needed for the proof of Theorem \ref{thm:asym}). Then
we have \[
\int g(lt)d\mu=\sum\widehat{g}(n)\overline{\widehat{\mu}(ln)}\]
and for a large $l$ the right-hand side is $<\widehat{g}(0)\widehat{\mu}(0)+\delta$
which gives (\ref{eq:lem9}).

For a general non-atomic $\mu$ we can use Wiener's theorem \cite[\S I.7.11]{K76}
to get \[
\frac{1}{2N+1}\sum_{|n|\le N}|\widehat{\mu}(n)|^{2}=o(1).\]
From this it follows that for a random $l_{N}$ between $N/2K$ and
$N/K$, \[
\lim_{N\to\infty}\sum_{0<|n|\le K}|\widehat{\mu}(l_{N}n)|=0\]
and the argument can be completed in the same way.
\end{proof}

\begin{proof}
[Proof of Theorem \ref{thm:asym}] Let $\dim_{\F}=d$ , $p>2/d$.
According to the definition of the Fourier dimension we can find a
positive measure $\mu\in\mathcal{M}(K)$ such that $\widehat{\mu}\in l^{p}(\mathbb{Z})$.
We can assume that $||\widehat{\mu}||_{l^{p}(\mathbb{Z})}=1$. Now
we define inductively a sequence of (complex) measures $\nu_{k}$.
Let us describe the $k^{\textrm{th}}$ step of the induction. Choose
a number $s=s_{k}>0$ such that $||\widehat{\mu}||_{l^{p}(-\infty,-s)}<2^{-k}$.
Set \[
g_{k}(t):=4^{-k}\sum_{j=1}^{4^{k}}e^{iq(j)t}\]
By choosing the frequencies $s<q(1)<...<q(4^{k})$ sparse enough
one can satisfy the inequality: \[
||\widehat{\mu g_{k}}||_{l^{p}(\mathbb{Z}^{+})}>1/2||\widehat{\mu}||_{l^{p}}||\widehat{g_{k}}||_{l^{1}}=1/2\]
and it will hold true when replace $g_{k}(t)$ by $g_{k}(lt)$ for
$l>1$. So for $\nu_{k}:=g_{k}(l)\mu$ we will have $||\nu_{k}||_{l^{p}(I_{k})}>1/2$
for a certain finite interval $I_{k}\subset\mathbb{Z}_{+}$, and $||\nu_{k}||_{l^{p}(\mathbb{Z}_{-})}<2^{-k}$.
In addition, choosing $l$ from lemma \ref{lem:ol1} for sufficiently
small $\delta$ we get from (\ref{eq:lem9}) \[
\left\Vert \nu_{k}\right\Vert _{\mathcal{M}(\mathbb{T})}\stackrel{\eqref{eq:lem9}}{<}2\left\Vert g_{k}\right\Vert _{1}\left\Vert \mu\right\Vert _{\mathcal{M}(\mathbb{T})}\le2\left\Vert g_{k}\right\Vert _{2}=2^{-k+1}\]
Certainly in the above induction we can get the intervals $I_{k}$
to be disjoint. It follows that the measure $\nu=\sum\nu_{k}$ satisfies
the requirements of the theorem.\end{proof}
\begin{rem*}
The restriction $p>2/d$ in theorem \ref{thm:asym} is essentially
sharp. Indeed, take a Salem set $K$ of given dimension, that is $\dim K=\dim_{F}K=d$.
Suppose there is a measure $\nu\in\mathcal{M}(K)$ such that $\widehat{\nu}\in l^{p}(\mathbb{Z}_{-})$
for some $p<2/d$. Then the condition (\ref{eq:one-sided}) is satisfied
for some $\alpha>d$. So Theorem \ref{thm:main} implies $\dim K>d$.
Which is a contradiction.
\end{rem*}

\subsection*{Questions}
\begin{enumerate}
\item In the spirit of Theorem \ref{thm:asym} one may ask the following.
Let $K$ be a compact with $\dim K=d$. Does it follow that for any
$0<\alpha<d$ there is a measure $\mu$ supported on $K$ with \begin{equation}
\sum_{n<0}\frac{|\widehat{\mu}(n)|^{2}}{|n|^{1-\alpha}}<\infty\qquad\sum_{n>0}\frac{|\widehat{\mu}(n)|^{2}}{|n|^{1-\alpha}}=\infty?\label{eq:asym l2w}\end{equation}
The following strict version is also open: Let $K$ be some compact
which supports a measure $\mu$ with $\sum_{n\neq0}|\widehat{\mu}(n)|^{2}|n|^{\alpha-1}<\infty$.
Is it always true that it also supports a measure $\mu$ with (\ref{eq:asym l2w})?
\item Does there exist a compact of uniqueness $K$ (see e.g.\ the book
\cite{KL87} for sets of uniqueness) which supports a pseudo-measure
$S$ with $\widehat{S}(n)=o(1)$ as $n\to+\infty$? One can show that
Cantor sets with any ratio of dissection do not (in fact any $H^{(n)}$-set
does not support such a pseudo-measure, the argument of Piatetski-Shapiro
applies, see \cite[chap. III, thm. 4]{KL87}).
\item One can introduce $\dim_{\lp}(K)$ as \[
\sup\{a:K\mbox{ supports a measure }\mu\mbox{ such that }\widehat{\mu}\in l^{2/a}(\mathbb{Z})\}\,.\]
Then $\dim_{\F}K\le\dim_{\lp}K\le\dim K$. Is it any easier to construct
a set with $\dim K=\dim_{\lp}K$ than a Salem set? One may call such
sets {}``quasi-Salem sets''.
\end{enumerate}

\subsection*{Acknowledgements}

Work partially supported by the authors' respective Israel Science
Foundation grants.

\end{document}